\definecolor{lred}{RGB}{226, 106, 106}
\definecolor{nred}{RGB}{237, 28, 36}
\definecolor{lblue}{RGB}{52, 152, 219}
\definecolor{nblue}{RGB}{0, 174, 239}
\definecolor{lyellow}{RGB}{232, 197, 91}
\definecolor{dgreen}{RGB}{0, 148, 68}
\definecolor{l1yellow}{RGB}{217, 224, 33}
\definecolor{lgrey}{RGB}{179, 179, 179}
\definecolor{indigo}{rgb}{0.29, 0.0, 0.51}  % custom colors
\theoremstyle{plain}
\newtheorem{theorem}{Theorem}
\newtheorem{proposition}[theorem]{Proposition}
\newtheorem{lemma}[theorem]{Lemma}
\newtheorem{question}[theorem]{Question}
\newtheorem*{theorem:main}{Theorem~\ref{theorem:main}}
\newtheorem*{corollary:existence}{Theorem~\ref{corollary:existence}}
\newtheorem*{theorem:general_obstruction}{Theorem~\ref{theorem:general_obstruction}}
\theoremstyle{definition}
\newtheorem{definition}[theorem]{Definition}
\theoremstyle{remark}
\newtheorem{remark}[theorem]{Remark}
\numberwithin{theorem}{section}
\title{L-space knots with positive surgeries that are not weakly symplectically fillable }
\author[Isacco Nonino]{Isacco Nonino}
\address{University of Glasgow, Glasgow, UK}
\email{Isacco.Nonino@glasgow.ac.uk}
\begin{document}
\begin{abstract}
    In this paper we discuss a general strategy to detect the absence of weakly symplectic fillings of $L$-spaces. We start from a generic $L$-space knot and consider (positive) Dehn surgeries on it. We compute, using arithmetic data depending only on the knot type and the surgery coefficient, the value of the relevant geometric invariants used to obstruct fillability.
We also provide a new example of an infinite family of hyperbolic $L$-spaces that do not admit weakly symplectic fillings. These manifolds are obtained via rational Dehn surgery on the $L$-space knots $\{K_n\}_n$ described in \cite{baker2022census}. This is a new infinite family of manifolds that lie inside $\{\text{Tight}\}$ but not inside $\{\text{Weakly Fillable}\}$.
\end{abstract}
\maketitle

\section{Introduction}

Understanding whether a closed 3-manifold $M$ admits a tight contact structure is one of the most relevant questions in 3-dimensional contact topology.
It is known that any manifold admitting a taut foliation has a fillable and hence tight contact structure \cite{confoliations}.
It is also known that a Seifert fibered 3-manifold admits a tight contact structure if and only if it is not $(2q-1)$-surgery on the $(2,2q+1)$-torus knot, for $q \ge 1$ \cite{LiscaStipsiczSeifertFeibered}.
 Therefore the existence question is fully answered for Seifert fibered 3-manifolds. Due to the intrinsic dichotomy of geometric manifolds, it only remains to consider the following problem:
\begin{question}\label{question:existence}
    Does every \emph{hyperbolic} 3-manifold admit a tight contact structure?
\end{question}
Up to now no example of hyperbolic 3-manifold without tight contact structures has been produced.
As mentioned before, taut foliations can be perturbed into fillable and hence tight contact structures. 

Recall that an $L$-space is a rational homology sphere $Y$ with the simplest possible Heegaard Floer homology, in the sense that $rank\widehat{HF}(Y)=|H_1(Y;\mathbb{Z})|$. A result of \cite{OzsvathSzaboHolo} shows that the existence of a taut foliation ensures $rank\widehat{HF}(Y)>|H_1(Y;\mathbb{Z})|$. It follows that $L$-spaces do not admit taut foliations. It is thus natural to consider hyperbolic $L$-spaces to try and construct a hyperbolic 3-manifold with no tight contact structures.

Another relevant notion closely related to tightness is that of a \emph{symplectic filling}. A contact manifold is \emph{symplectically fillable} if it bounds a symplectic manifold with a certain boundary condition. Depending on the conditions imposed on the boundary we obtain different notions of symplectic fillability. We are mainly interested in \emph{weakly symplectically fillable} manifolds.
\begin{definition}[Weak fillability]
    A contact manifold $(M,\xi)$ is \emph{weakly fillable} if $M$ is the oriented boundary of a symplectic manifold $(X, \omega)$ and $\omega|_{\xi} >0$.
\end{definition}

This is not the only notion of fillability - one can require a manifold to be \emph{strongly} or \emph{Stein} fillable, for example. These notions are stronger and imply weak fillability. A  contact 3-manifold with no weak fillings has no other type of symplectic fillings.

Trying to understand whether a manifold admits \emph{weakly fillable} tight contact structures is an interesting question on its own. Even if it is well known that weak fillability is stronger than tightness \cite{EtnyreHonda}, finding examples of manifolds \emph{not} having weakly fillable contact structures is still a decent starting point to try and answer Question \ref{question:existence}.

In this paper we construct an obstruction for weak fillability of $L$-spaces. Our inspiration comes from the work of \cite{YoulinYajing,TosunKaloti}. The manifolds considered in \cite{YoulinYajing,TosunKaloti} are hyperbolic $L$-spaces obtained via rational Dehn surgery on the \emph{pretzel knots} $P(-2,3,2q+1)$. 
Here we show how their strategy can be used for a general $L$-space knot $K$. Recall these are knots that admit a positive integral surgery to an $L$-space. 

Roughly speaking, the strategy goes as follows. If the $L$-space constructed via $n$-surgery on the knot $K$ has symplectic fillings, such fillings need to be negative definite \cite{OzsvathSzaboHolo}. On the other hand, if the surgered manifold has a negative definite symplectic filling (and the surgery coefficient is square-free) its $d$-invariants have to satisfy some inequality (see Theorem \ref{theorem:OwensStrle}). Therefore if the inequality does not hold, $n$-surgery on $K$ cannot admit weakly symplectic fillings.

In this paper we use an explicit description of the symmetrized Alexander polynomial $\Delta_K(t)$ of $K$ to give a general computation of the $d$-invariants of the $n$-surgery on $K$. In short, the symmetrized Alexander polynomial of $K$ can be \emph{uniquely described} by a vector $r=(r_1,{\cdots},r_k)$  (see Lemma \ref{lemma:L-space-Alexander-Polynomial} and the discussion below it.). Using $r$ and a formula from \cite{OwensStrle} (see Theorem \ref{theorem:OwensStrle2}) we can then write down the values of the $d$-invariants using only the entries $r_i$ of $r$ and $n$.

We show the following result:
\begin{theorem}[General obstruction]\label{theorem:general_obstruction}
     Let $K$ be an $L$-space knot of genus $g(K)$ and symmetrized Alexander polynomial described by $r=(r_1,{\cdots},r_k)$.
    Let $n \in \mathbb{N}$ with $n \ge 2g(K) -1$. Suppose $n$ is square-free. The  $d$-invariants can
 be computed as in Theorem \ref{theorem:general_d_invariants} using only $r$ and $n$. If the invariants fail to satisfy the inequalities in Theorem \ref{theorem:OwensStrle}, then the manifold $K(n)$ has no weakly symplectically fillable contact structures.
\end{theorem}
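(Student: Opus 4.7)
The plan is to argue by contradiction, assembling the ingredients already available in the excerpt. Suppose $K(n)$ admits a weakly symplectically fillable contact structure $\xi$, with filling $(X,\omega)$. Because $n \ge 2g(K)-1$ and $K$ is an $L$-space knot, a theorem of Ozsv\'ath--Szab\'o guarantees that $K(n)$ is an $L$-space. Another result of Ozsv\'ath--Szab\'o then forces any symplectic filling of an $L$-space to be negative definite, so $b_2^+(X)=0$. Combined with the square-free hypothesis on $n$, this puts us exactly in the setting of Theorem~\ref{theorem:OwensStrle}, which gives an inequality constraint on the full collection of $d$-invariants $\{d(K(n),\mathfrak{s})\}_{\mathfrak{s}\in\mathrm{Spin}^c(K(n))}$. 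The contradiction to be derived is that these $d$-invariants, when actually computed, violate that inequality.

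The substantive content is therefore the claim that the $d$-invariants can be written down in closed form from $r$ and $n$, i.e.\ the statement of Theorem~\ref{theorem:general_d_invariants} that is invoked. The plan is to combine two inputs: first, Lemma~\ref{lemma:L-space-Alexander-Polynomial}, which tells us that an $L$-space knot's symmetrized Alexander polynomial is uniquely encoded by the vector $r=(r_1,\ldots,r_k)$ (essentially the gap sequence between consecutive nonzero coefficients, so that the torsion coefficients $t_i(K)$ are explicit arithmetic functions of $r$); and second, the formula from Theorem~\ref{theorem:OwensStrle2}, which expresses $d(K(n),\mathfrak{s})$ in terms of the torsion coefficients of $K$ and an explicit rational function of the Spin$^c$-label and the surgery coefficient $n$. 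Substituting the first into the second yields the promised formula depending only on $r$ and $n$; once that computation is set up, it is purely a matter of indexing and arithmetic.

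With the $d$-invariants in hand, the proof closes itself: by assumption of the theorem, the values obtained from Theorem~\ref{theorem:general_d_invariants} fail the Owens--Strle inequalities of Theorem~\ref{theorem:OwensStrle}. This contradicts the conclusion forced on any hypothetical negative-definite filling, so no weakly symplectic filling of $K(n)$ can exist, and in particular $K(n)$ admits no weakly fillable contact structure.

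The main obstacle is bookkeeping rather than conceptual: one must verify that the Spin$^c$-structures on $K(n)$ arising in Theorem~\ref{theorem:OwensStrle} are labelled compatibly with the labelling used in the Owens--Strle surgery formula (Theorem~\ref{theorem:OwensStrle2}), and that the reindexing in terms of $r$ respects the symmetry $d(Y,\mathfrak{s})=d(Y,-\mathfrak{s})$ present in any rational homology sphere. Once this identification is made carefully, the proof is a short concatenation: $L$-space knot $\Rightarrow$ $L$-space surgery $\Rightarrow$ negative-definite filling $\Rightarrow$ Owens--Strle inequality $\Rightarrow$ contradiction with the explicit $d$-invariant formula.
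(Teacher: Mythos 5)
Your proposal is correct and follows essentially the same route as the paper: $K(n)$ is an $L$-space since $n \ge 2g(K)-1$, any weak symplectic filling of an $L$-space must be negative definite by Ozsv\'ath--Szab\'o, the square-free hypothesis puts one in the setting of Theorem~\ref{theorem:OwensStrle}, and the explicit $d$-invariants from Theorem~\ref{theorem:general_d_invariants} (obtained by feeding the torsion coefficients determined by $r$ into Theorem~\ref{theorem:OwensStrle2}) violate that inequality by hypothesis, a contradiction. The only discrepancy is cosmetic: the paper attributes the $L$-space surgery statement to the reference cited as Lisca--Stipsicz--Szab\'o rather than to Ozsv\'ath--Szab\'o, which does not affect the argument.
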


We then consider the infinite family of hyperbolic $L$-space knots $\{K_n\}_n$ described in \cite{baker2022census} (see Figure \ref{figure:braids}).
\begin{figure}[htb]
\centering
\begin{tikzpicture}
%\draw[step=1cm,color=gray] (0,0) grid (14,6);%Uncomment this to get some helpful grid lines
\node[anchor=south west,inner sep=0] at (0,0){\includegraphics[width=15cm]{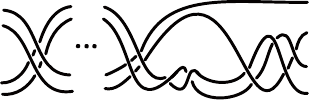}};
\node at (4.2,2) {$2n+1$};

\end{tikzpicture}
\caption{The braid $\beta_n$ whose closure gives the knot $K_n$ }
\label{figure:braids}
\end{figure} This is a quite interesting family of knots, since it might provide an infinite family of $L$-space knots that are \emph{not} braid positive. In \cite[Corollary 1.5]{mark2023fillable} it is shown that any braid positive knot is fillable (by this we mean that it has some positive surgery that admits weakly symplectic filllings). 

\begin{question}
    Are the knots $\{K_n\}_n$ fillable?
\end{question} 

  In order to check whether the $K_n$'s are fillable it suffices to check all the slopes in the range $[2g(K_n),4g(K_n)]$. This is because in \cite[Proposition 1.7]{mark2023fillable} it is shown that if a knot is fillable then the minimal slope that admits fillings lies in the interval $[2g(K_n),4g(K_n)]$. 

 In this paper we do not show that the $K_n$'s are not fillable. The interval we are able to study with our techinques is considerably smaller than $[2g(K_n),4g(K_n)]$. However, it does give some evidence against the existence of weakly symplectically fillable structures. \footnote{It is important to remark that even if the knots $K_n$ were shown to be fillable this would not imply that they are braid positive.}

We prove the following two results .
\begin{theorem}\label{theorem:main}
    Let $n \ge 1 \in \mathbb{N}$ so that there is a square-free integer in  $\{8n+3, 8n+5\}$. Let $m$ be the maximal such square-free number and $q$ a rational number so that $ q \in [8n+3, m]$. Let $K_n(q)$ denote the $q$ rational Dehn surgery along the knot $K_n$.  Then $K_n(q)$ is (generically) a hyperbolic 3-manifold that admits no weakly symplectically fillable contact structures.
\end{theorem}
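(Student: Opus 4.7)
The plan is to specialize the general obstruction of Theorem~\ref{theorem:general_obstruction} to the family $\{K_n\}$. The first step is to read off, from the braid word $\beta_n$ in Figure~\ref{figure:braids}, the Seifert genus $g(K_n)$ and the symmetrized Alexander polynomial $\Delta_{K_n}(t)$. The lower endpoint $8n+3$ of the interval strongly suggests that $g(K_n)=4n+2$, so that $8n+3 = 2g(K_n)-1$ and the integer slopes in $[8n+3, m]$ lie in the $L$-space surgery range. I would compute $\Delta_{K_n}(t)$ directly from the braid description (or from the fibration structure of $K_n$), extract the vector $r=(r_1,\ldots,r_k)$ encoding the polynomial via Lemma~\ref{lemma:L-space-Alexander-Polynomial}, and record this data as input for the $d$-invariant formulas.

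With $r$ and $g(K_n)$ in hand, the second step is to substitute into Theorem~\ref{theorem:general_d_invariants} and obtain closed-form expressions for the $d$-invariants of $K_n(s)$ at the integer square-free endpoints $s \in \{8n+3,\,8n+5\}$. A direct arithmetic check, as a function of $n$, should show that at least one of the inequalities of Theorem~\ref{theorem:OwensStrle} is violated by some $\mathrm{spin}^c$ structure. By Theorem~\ref{theorem:general_obstruction} this rules out weakly symplectically fillable contact structures on $K_n(s)$ for such $s$.

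To pass from the integer endpoints to an arbitrary rational $q \in [8n+3, m]$, I would appeal to a rational surgery version of the Owens--Strle obstruction: writing $q=p/r$ in lowest terms with $p$ square-free, the mapping cone formula reproduces the same $d$-invariant inequalities and the obstruction persists. Alternatively, if some $q \in [8n+3,m]$ admitted a weak filling $W$, one could cap off with an appropriate negative definite two-handle cobordism to an integer endpoint slope to produce a weak filling of $K_n(8n+3)$ or $K_n(m)$, contradicting the previous step. The generic hyperbolicity of $K_n(q)$ is then immediate from Thurston's hyperbolic Dehn surgery theorem applied to the hyperbolic knot $K_n$: only finitely many slopes can fail to give a hyperbolic manifold.

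The main obstacle is the second step: producing a closed-form for the $d$-invariants of $K_n(8n+3)$ and $K_n(8n+5)$ in terms of the (potentially long) vector $r$ and verifying \emph{uniformly in} $n$ that the Owens--Strle inequality fails at some $\mathrm{spin}^c$ structure. This requires a clean combinatorial handle on $\Delta_{K_n}(t)$ and careful bookkeeping of the tested $\mathrm{spin}^c$ structures and their images under the Owens--Strle map. By contrast, the extension to rational slopes and the hyperbolicity statement should be comparatively routine once the integer endpoint case is established.
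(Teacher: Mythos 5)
Your skeleton is essentially the paper's: read off $g(K_n)=4n+2$ and the jump vector $r=(1,\ldots,1,3,\ldots,3)$ ($n+2$ ones, $n$ threes) from $\Delta_{K_n}$ (the paper simply quotes \cite{baker2022census} for the polynomial rather than recomputing it from the braid), feed this into Lemma \ref{lemma:general_torsion_coefficients} and Theorem \ref{theorem:general_d_invariants} to evaluate the $d$-invariants at the integer slopes, apply the Owens--Strle obstruction at the square-free endpoint, and get generic hyperbolicity from Thurston. Two points, however, need repair. First, your phrase ``at least one of the inequalities of Theorem \ref{theorem:OwensStrle} is violated by some $\mathrm{spin}^c$ structure'' has the quantifier backwards: the inequality constrains $\max_{t}4d(Y,t)$, so to rule out a negative definite filling you must show the bound fails for \emph{every} $\mathrm{spin}^c$ structure. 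This is exactly what Proposition \ref{proposition:d_invariants} does: all $d$-invariants of $K_n(8n+k)$, $k\in\{3,4,5\}$, are shown to be negative, uniformly in $n$, by bounding the torsion coefficients of Proposition \ref{proposition:torsion_coefficients} from below by the linear function $h(j)=-\tfrac{j}{4}+n+\tfrac12$ and analyzing the resulting quadratic in $j$ at its endpoints; so the ``uniform in $n$'' obstacle you flag is resolved by an elementary estimate rather than heavy bookkeeping.

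Second, your primary route to rational slopes does not work as stated: for $q=p/r\in[8n+3,m]$ one has $|H_1(K_n(q))|=p$, which need not be square-free, and in any case you would have to redo the $d$-invariant verification for each such $q$, which you do not. Your fallback is the correct argument and is the paper's (Theorem \ref{theorem:rationals_obstruction}, resting on Theorem \ref{theorem:owensstrlerationals}): a weak filling of the $L$-space $K_n(q)$ must be negative definite by \cite{OzsvathSzaboHolo}, and then $K_n(q')$ bounds a negative definite $4$-manifold for every rational $q'\ge q$; taking $q'=m$ contradicts the $d$-invariant computation at the square-free integer endpoint. Two precisions there: what you obtain after capping off is only a negative definite bounding $4$-manifold, not a weak symplectic filling, and the slope can only be increased along such cobordisms, so the contradiction must be reached at $m$, the larger square-free slope, not at $8n+3$.
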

Here by \emph{generically} we mean that we are excluding the exceptional (i.e. not giving a hyperbolic manifold) slopes that potentially live in the interval, since these are finite by Thurston’s hyperbolic Dehn surgery theorem. 

Moreover, after proving an existence result for tight contact structures, we obtain the following:
\begin{theorem}\label{corollary:existence}
    Let $n \ge 1$ and $q \in \mathbb{Q}$. Then the manifolds $K_n(q)$ admit a tight contact structure for $q \notin [8n+1, 8n+3]$. 
    
 Moreover if $8n+5$ is square-free, the family $K_n(q)$, $q \in (8n+3, 8n+5] \cap \mathbb{Q}$ , contains an infinite family of hyperbolic $L$-spaces that admit tight contact structures but do not admit weakly symplectically fillable contact structures.
\end{theorem}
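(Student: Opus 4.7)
The plan is to split the existence of tight contact structures into two ranges, small-slope $q < 8n+1$ and large-slope $q > 8n+3$, and then combine the large-slope construction with Theorem~\ref{theorem:main} to extract the infinite family of hyperbolic $L$-spaces that are tight but not weakly fillable. A key preliminary step is to identify the maximal Thurston--Bennequin invariant of $K_n$. The braid $\beta_n$ of Figure~\ref{figure:braids} has a natural Legendrian realization whose writhe and braid index together give the bound $\overline{tb}(K_n) \geq 8n+1$; since $K_n$ is an $L$-space knot, the Plamenevskaya--Hedden bound $\overline{tb}(K) \leq 2\tau(K)-1 = 2g(K)-1$ forces equality. Hence $g(K_n) = 4n+1$ and $\overline{tb}(K_n) = 8n+1$.

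For $q < 8n+1$, I would construct a Stein filling of $K_n(q)$ and hence a tight contact structure. Starting from the Legendrian representative $\mathcal{L}$ realizing $\overline{tb}(K_n)=8n+1$, any integer $q \leq 8n$ is obtained by stabilizing $\mathcal{L}$ enough to lower $tb$ to $q+1$ and then applying contact $(-1)$-surgery, which produces a Stein (hence weakly fillable, hence tight) contact structure on $K_n(q)$. Rational slopes $q \in \mathbb{Q}\cap(-\infty, 8n+1)$ are handled by the Ding--Geiges--Stipsicz rational contact surgery algorithm, which expresses them as a sequence of contact $(-1)$-surgeries on a link obtained from $\mathcal{L}$ by stabilization, again yielding Stein fillability. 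For $q > 8n+3$, fillings are obstructed in part of the range by Theorem~\ref{theorem:main}, so the construction must be intrinsically non-filling. I would use positive contact surgery: perform contact surgery on $\mathcal{L}$ with positive rational coefficient $r = q - 8n - 1$, converted via the Ding--Geiges algorithm into a sequence consisting of one contact $(+1)$-surgery and several contact $(-1)$-surgeries on appropriate stabilizations. Tightness of the resulting structure would then be verified by computing the Ozsv\'ath--Szab\'o contact invariant and showing it is nonzero, using the Lisca--Stipsicz nonvanishing theorem for positive contact surgery applied to $L$-space knots; the $L$-space condition on $K_n$ is what makes this computation feasible.

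With both existence ranges in hand, the second half of the theorem follows immediately: for $q \in (8n+3, 8n+5] \cap \mathbb{Q}$ with $8n+5$ square-free, the manifold $K_n(q)$ is an $L$-space (since $q > 2g(K_n)-1$) and, by the above, admits a tight contact structure, while Theorem~\ref{theorem:main} rules out every weakly symplectically fillable contact structure on it. Thurston's hyperbolic Dehn surgery theorem then guarantees that all but finitely many such rational slopes produce hyperbolic manifolds, giving the infinite family.

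The main obstacle is the large-slope existence argument: Legendrian surgery automatically delivers Stein fillings and so cannot be used for $q > 8n+3$, and one must instead produce a tight but non-filling contact structure via positive contact surgery. The delicate point is verifying non-vanishing of the contact invariant in this regime; this is where the $L$-space knot hypothesis (controlling $\widehat{HF}$ of the surgery) and the specific form of $\Delta_{K_n}(t)$ extracted via Lemma~\ref{lemma:L-space-Alexander-Polynomial} should be essential.
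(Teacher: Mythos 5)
There is a genuine gap, and it starts with your computation of the genus. You claim that the bound $\overline{tb}(K_n)\ge 8n+1$ together with $\overline{tb}\le 2\tau(K_n)-1=2g(K_n)-1$ ``forces equality,'' and conclude $g(K_n)=4n+1$. This is a non sequitur (a lower bound of $8n+1$ and an upper bound of $2g-1$ force nothing unless you already know $2g-1=8n+1$), and it contradicts the paper's own computation: the Alexander polynomial $\Delta_{K_n}$ in Section 3 has top exponent $4n+2$, so $g(K_n)=4n+2$ and $2g(K_n)-1=8n+3$, the value used throughout Theorem~\ref{theorem:main}. The correct statement for a strongly quasi-positive knot is sharpness of the slice--Bennequin inequality, i.e.\ the existence of a Legendrian representative with $tb+|rot|=2g_s-1$; it does \emph{not} say $\overline{tb}=2g-1$. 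For $K_n$ the relevant representative has $tb=8n+1$ and $|rot|=2$, so $tb+|rot|=8n+3$ while $tb<2g_s-1$. Because you drop the rotation number entirely, your argument never explains why the excluded interval should be exactly $[8n+1,8n+3]$: your small-slope Legendrian-surgery construction only reaches slopes below $\overline{tb}=8n+1$, and your large-slope positive-contact-surgery argument gives no reason why its threshold should be $8n+3$ rather than some other value; with your (incorrect) genus $4n+1$ the natural threshold would even be $8n+1$, so the gap you assert is inconsistent with your own setup.

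The paper closes precisely this gap by quoting Theorem~\ref{theorem:tosun} (Tosun--Thomas): if $K$ admits a Legendrian representative with $tb+|rot|=2g_s-1$, then $S^3_{p/q}(K)$ is tight for all $p/q\notin[2g_s-|rot|-1,\,2g_s-1]$. Applied to the explicit Legendrian $\Tilde{K}_n$ with $tb=8n+1$, $|rot|=2$, $g_s=g=4n+2$, this gives tightness exactly for $q\notin[8n+1,8n+3]$, in one stroke and without a separate positive-contact-surgery analysis; the nonvanishing of the contact invariant you propose to verify by hand is already built into that theorem. Your final step (combining existence with Theorem~\ref{theorem:main} and Thurston's hyperbolic Dehn surgery theorem on the interval $(8n+3,8n+5]$ when $8n+5$ is square-free) matches the paper and is fine, but as written the existence half of your argument does not go through: you need the sharp slice--Bennequin representative with its rotation number, and the correct genus $g(K_n)=4n+2$, to obtain the stated interval.
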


\begin{remark}
    There is still no clear way to understand whether the surgery coefficient $q=8n+3$ produces a manifold admitting tight contact structures. The contact surgery with contact framing zero leads,by definition, to an overtwisted contact structure. 
    It is possible that $K_n(8n+3)$ provides an example of a manifold  without tight contact structures.
\end{remark}

\subsection{Structure of the paper}
The paper is structured as follows: in Section 2 we introduce the general algorithm to define the obstruction for weakly symplectic fillings.
In Section 3 we introduce the family of knots $\{K_n\}_n$ and prove Theorem \ref{theorem:main}. In Section 4 we then show that most of the manifolds included in Theorem \ref{theorem:main} admit tight contact structures.

\section{L-space knots and a general obstruction for weak fillability}

Let $K$ be an $L$-space knot in $\mathbb{S}^3$. Recall this is a knot that has a positive integral surgery to an $L$-space. By \cite{LiscaStipsiczSzabo}, we know that for an $L$-space knot $K$ of genus $g(K)$, every  $r$ surgery with $r \ge 2g(K)-1$ yields an $L$-space. We hence consider surgery coefficients $n \ge 2g(K)-1$ throughout the rest of the paper.

We use the following obstruction for a certain 3–manifold to bound a negative definite 4-manifold.
\begin{theorem}[{\cite[Theorem 2]{OwensStrle}}]\label{theorem:OwensStrle}
Let $Y$ be a rational homology sphere with $|H_1(Y;\mathbb{Z})|= \delta$. If $Y$ bounds a negative definite 4-manifold $X$, and if either $\delta$ is square-free or there is no torsion in $H_1(Y;\mathbb{Z})$, then:
$$ max_{t \in Spin^c(Y)} 4d(Y,t) \ge 
\begin{cases}
   1- \frac{1}{\delta} & \text{ if } \delta \text{ is odd or} \\
     1   & \text{ if } \delta \text{ is even.}
\end{cases}
$$

\end{theorem}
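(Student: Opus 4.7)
The plan is to combine the Ozsv\'ath--Szab\'o $d$-invariant inequality with a lattice-theoretic generalization of Elkies' theorem on characteristic vectors in negative definite forms.

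First, I would recall the Ozsv\'ath--Szab\'o correction-term bound: if $\mathfrak{s}$ is any $\mathrm{Spin}^c$ structure on $X$ restricting to $\mathfrak{t}$ on $Y$, then
\[
c_1(\mathfrak{s})^2 + b_2(X) \;\leq\; 4\,d(Y,\mathfrak{t}).
\]
Since $Q_X$ is negative definite, $c_1(\mathfrak{s})^2 \leq 0$, so the strategy is to produce, for a judicious choice of $\mathfrak{t}$, a $\mathrm{Spin}^c$ extension $\mathfrak{s}$ whose $c_1(\mathfrak{s})^2$ is as close to $0$ as possible; then $4d(Y,\mathfrak{t})$ is forced to be at least $b_2(X) - |c_1(\mathfrak{s})^2|$. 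Maximizing over $\mathfrak{t}$ therefore reduces the theorem to a purely arithmetic question about characteristic elements of the intersection form.

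Second, I would translate this to the lattice $L = H^2(X;\mathbb{Z})/\mathrm{Tors}$ with its negative-definite pairing $Q_X$. The first Chern classes of $\mathrm{Spin}^c$ structures on $X$ run over the characteristic covectors in $L^* = \mathrm{Hom}(L,\mathbb{Z})$, and their classes modulo $L$ correspond, via the long exact sequence of the pair $(X,Y)$, to $\mathrm{Spin}^c$ structures on $Y$. The question becomes: across every characteristic coset in the discriminant group $L^*/L$ (of order $\delta$), what is the minimum value of $-c^2$ attained? Elkies' theorem supplies the benchmark in the unimodular case $\delta=1$: there always exists a characteristic $c$ with $-c^2 \leq \mathrm{rk}(L)$, which recovers the classical bound $4d(Y) \geq 0$ for integer homology spheres bounding negative-definite manifolds.

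Third, I would prove the analogue for $\delta>1$ square-free by combining Elkies' argument with an averaging over the finite abelian group $L^*/L$. The square-free (or torsion-free) hypothesis is essential: it forces the discriminant form on $L^*/L$ to split as an orthogonal sum of cyclic forms of distinct prime order, so the average excess of $-c^2$ over $\mathrm{rk}(L)$, taken across characteristic representatives in a fixed coset, can be computed explicitly. A careful book-keeping of these averages yields bound $1 - 1/\delta$ when $\delta$ is odd, and bound $1$ when $\delta$ is even; the parity dichotomy reflects whether the discriminant admits an element of order two, which changes how characteristic and ordinary cosets interact.

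The hard part will be this third step, namely the sharp non-unimodular lattice inequality with the explicit constants depending on the parity of $\delta$. Elkies' original theta-series argument has to be upgraded to keep track of cosets in $L^*/L$ simultaneously, and the square-free hypothesis is precisely what keeps the resulting sum manageable. Once that lattice estimate is established, substituting into the Ozsv\'ath--Szab\'o inequality from Step 1 and taking the maximum over $\mathfrak{t}\in\mathrm{Spin}^c(Y)$ yields the stated bound.
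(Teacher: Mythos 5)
This statement is not proved in the paper at all: it is quoted verbatim from Owens--Strle, so the relevant benchmark is their original argument, which does have exactly the architecture of your steps 1 and 2 --- the Ozsv\'ath--Szab\'o inequality $c_1(\mathfrak{s})^2 + b_2(X) \le 4d(Y,\mathfrak{s}|_Y)$ for negative definite $X$, followed by a purely lattice-theoretic statement about minimal norms of characteristic covectors that generalizes Elkies' theorem. Those two steps of yours are essentially correct, with one caveat worth flagging: the identification of $L^*/L$ as a group of order $\delta$ is precisely where the extra hypothesis enters, since in general $|\det Q_X|$ is only $\delta$ divided by a square; square-freeness of $\delta$ (or absence of torsion in $H_1$ of the \emph{four-manifold} $X$, which is what Owens--Strle actually assume --- the ``$H_1(Y)$'' in the quoted statement is a slip) is what forces $|\det Q_X|=\delta$.

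The genuine gap is your step 3, which is not a bookkeeping refinement of Elkies but the entire content of the theorem, and the mechanism you propose --- averaging the excess of $-c^2$ over $\mathrm{rk}(L)$ across characteristic representatives of the cosets in $L^*/L$ --- provably cannot deliver the stated bound. The inequality concerns the \emph{global minimum} of $-c^2$ over characteristic covectors, and no average over the discriminant group detects the defect $1/\delta$: already for the extremal lattice $(n-1)\langle 1\rangle \oplus \langle \delta\rangle$ the per-coset minima of the characteristic norm are $n-1+a^2/\delta$ with $a$ running over residues of fixed parity, so their average over the $\delta$ cosets is of order $n-1+\delta$, vastly larger than the target $n-1+1/\delta$; hence any bound on an average cannot certify the existence of a single covector with $-c^2 \le n-1+1/\delta$ (respectively $n-1$ for $\delta$ even). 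Elkies' theorem itself is not an averaging statement either, but a positivity argument using theta series with harmonic polynomial coefficients, and its extension to definite lattices of square-free determinant --- including the odd/even dichotomy, which ultimately reflects the minimal characteristic norm $1/\delta$ versus $0$ of the rank-one lattice $\langle\delta\rangle$ --- is the main technical theorem of the Owens--Strle paper, not a consequence of the splitting of the discriminant form into cyclic pieces of prime order. As written, your step 3 silently assumes exactly this result; a complete proof must either reproduce that lattice theorem or cite it, at which point steps 1 and 2 do finish the argument as you describe.
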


Recall that due to work of \cite{OwensStrle}, when $n$ is an integer we can enumerate the $Spin^c$ structures on the surgered 3-manifold $K(n)$ indexing them by integers $i$, with $|i| \le n/2$.
Additionally, if $K$ is an $L$-space knot, then the $d$-invariants of integral surgeries can be computed by the following formula:

\begin{theorem}[{\cite[Theorem 6.1]{OwensStrle}}]\label{theorem:OwensStrle2}
    Given $n \in \mathbb{N}$ and $\forall |i| \le n/2$, we have:

    \begin{equation}\label{formula:d-invariants}
        d(K(n),i)= d(U(n),i)-2t_i(K)
        =\frac{(n-2|i|)^2}{4n} - \frac{1}{4} - 2t_i(K),
    \end{equation}
\end{theorem}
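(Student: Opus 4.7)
The plan is to reduce the $d$-invariant computation to data on the knot Floer complex via Ozsv\'ath--Szab\'o's integer surgery formula. The hypothesis $n \ge 2g(K)-1$ places us in the ``large surgery'' regime, where for each $\mathrm{Spin}^c$ structure labelled by $|i|\le n/2$ there is a grading-preserving isomorphism (up to an explicit shift depending on $n$ and $i$)
$$HF^+(K(n), i) \;\cong\; H_*(A^+_i),$$
with $A^+_i \subset CFK^\infty(K)$ the standard hook-shaped subcomplex. Specializing to the unknot gives $K(n)=L(n,1)$ and reproduces the familiar formula $d(U(n),i)=\tfrac{(n-2|i|)^2}{4n}-\tfrac14$, which is precisely the first summand appearing in \eqref{formula:d-invariants}.

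The second step uses that $K$ is an $L$-space knot, which by \cite{OzsvathSzaboHolo} forces $\widehat{HFK}(K)$ to be a thin staircase whose step lengths are encoded by $\Delta_K(t)$. This rigidity implies that each $A^+_i$ is quasi-isomorphic to a single tower $\mathcal{T}^+$, so the only nontrivial information is the $\mathbb{Q}$-grading of its bottom generator. Tracing the surgery isomorphism, one finds that this bottom grading differs from $d(U(n),i)$ by exactly $-2t_i(K)$, where
$$t_i(K) \;=\; \sum_{j\ge 1} j \, a_{|i|+j}$$
and the $a_s$ are the coefficients of the symmetrized Alexander polynomial $\Delta_K(t)$. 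The symmetry $\Delta_K(t)=\Delta_K(t^{-1})$ makes $t_i$ depend only on $|i|$, which accounts for the absolute value in \eqref{formula:d-invariants}.

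The main obstacle is the careful bookkeeping of the absolute $\mathbb{Q}$-grading through the large-surgery isomorphism, together with the identification of the combinatorial torsion coefficient $t_i(K)$ (read off from the Alexander polynomial) with the filtration-theoretic quantity $V_i$ arising from the staircase structure of $CFK^\infty(K)$. Once these two identifications are in place, the equality $d(K(n),i)-d(U(n),i) = -2t_i(K)$ follows directly from comparing the two computations of $HF^+$, and the explicit closed form in \eqref{formula:d-invariants} is obtained by plugging in the unknot value.
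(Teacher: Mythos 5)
The paper does not prove this statement at all: Theorem~\ref{theorem:OwensStrle2} is imported verbatim from Owens--Strle, so there is no internal argument to compare yours against. Your outline does follow the standard Heegaard Floer route by which this formula is established in the literature: large surgery isomorphism $HF^+(K(n),i)\cong H_*(A_i^+)$ with the same grading shift as for the unknot, the unknot computation giving $d(U(n),i)=\frac{(n-2|i|)^2}{4n}-\frac14$, and the structure of $CFK^\infty$ of an $L$-space knot forcing $H_*(A_i^+)$ to be a single tower whose bottom grading is shifted by $-2V_i$ with $V_i=t_i(K)$.

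However, as written the proposal has a genuine gap at exactly the decisive point: the claims that the bottom of the tower sits in grading $d(U(n),i)-2V_{|i|}$ and that $V_{|i|}=t_{|i|}(K)$ for an $L$-space knot are asserted, not argued --- you even flag the grading bookkeeping and the identification of $t_i$ with $V_i$ as ``the main obstacle'' and then declare the result follows once they are in place. That identification is the actual content of the theorem; a complete argument would, for instance, compute $H_*(A_i^+)$ from the staircase model of $CFK^\infty(K)$ determined by Lemma~\ref{lemma:L-space-Alexander-Polynomial}, or combine the general Euler-characteristic relation between $HF^+_{red}$ of large surgeries and the torsion coefficients with the vanishing of $HF^+_{red}$ for an $L$-space surgery. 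Two smaller inaccuracies: the $L$-space knot structure of $\widehat{HFK}$ is a staircase but not ``thin'' (thinness is the alternating-knot condition), and the correct source for that structure is the lens space surgeries paper (cited here as \cite{OzsvathSzaboL-surgery}), not \cite{OzsvathSzaboHolo}; also note that the formula genuinely requires the $L$-space knot hypothesis (it fails, e.g., for knots with $V_i\neq t_i$), so that hypothesis should be stated explicitly rather than carried implicitly from context.
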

where $U$ is the unknot, $t_i$ the torsion coefficient :
\begin{equation}\label{equation:torsion_coefficient}
    t_i(K)= \sum_{j >0} jc_{|i|+j},
\end{equation}
and $c_h$ is the coefficient of $t^h$ in the symmetrized Alexander polynomial of $K$.

Theorem \ref{theorem:OwensStrle} gives us a strategy to study weakly symplectic fillings of the surgered manifolds $K(n)$ when $n$ is square-free. Suppose that all the $d$-invariants (which can be computed using Theorem \ref{theorem:OwensStrle2}) fail to satisfy the inequalities in Theorem \ref{theorem:OwensStrle}. Since $K(n)$ is an $L$-space, by \cite{OzsvathSzaboHolo} all of its symplectic fillings are negative definite. The existence of such a negative definite filling would then contradict our assumption on the $d$-invariants. 

Hence, if we can show the $d$-invariants associated to $K(n)$ do not satisfy the required bounds then we have an obstruction to weak fillability.
We therefore procede to describe a way to compute the $d$-invariants associated to $n \ge 2g(K)-1$ surgeries on $K$ for a general $L$-space knot.

    First we note that for an $L$-space knot there is a specific description of its symmetrized Alexander polynomial.

\begin{lemma}[{\cite[Corollary 1.3]{OzsvathSzaboL-surgery}},{\cite[Corollary 9]{heddenwatson}}]\label{lemma:L-space-Alexander-Polynomial}
    If $K \subset \mathbb{S}^3$ is a knot which admits an $L$-space surgery, then:
    \begin{equation}\label{equation:alexander-polynomial-general}
        \Delta_K(t)= \sum_{i=-k}^{k} (-1)^{k+i} t^{n_i}
    \end{equation}
    for some sequence of integers $n_{-k} < n_{-k+1} < \cdots < n_{k-1} < n_k$ satisfying $n_i=-n_{-i}$. Additionally we know that $n_k-n_{k-1}=1$.
\end{lemma}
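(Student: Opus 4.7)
The plan is to leverage the severe constraints that the $L$-space surgery hypothesis imposes on the knot Floer homology of $K$. The starting point is Ozsváth--Szabó's foundational result: if $K$ admits a positive $L$-space surgery, then in each Alexander grading $s$ the group $\widehat{HFK}(K,s)$ has rank at most one. This collapses the Floer homology of $K$ to a very thin object and provides the input for the entire structural description.

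I would then combine this rank bound with the categorification identity
$$\Delta_K(t) = \sum_{s \in \mathbb{Z}} \chi\bigl(\widehat{HFK}(K,s)\bigr)\, t^s.$$
Every coefficient of $\Delta_K(t)$ therefore lies in $\{-1,0,+1\}$. Listing the exponents $n_{-k} < \cdots < n_k$ at which the coefficient is non-zero, the required sign pattern $(-1)^{k+i}$ would follow by showing that the absolute Maslov gradings of the groups $\widehat{HFK}(K,n_i)$ alternate in parity as $i$ increases. This alternation is forced by the $L$-space condition: the $U$-equivariant large-surgery complex must produce homology isomorphic to a single tower in each $\mathrm{Spin}^c$ structure, and two consecutive non-zero summands of the same parity would obstruct this. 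The symmetry $n_i = -n_{-i}$ is then immediate from the standard identity $\Delta_K(t) = \Delta_K(t^{-1})$ together with the uniqueness of the ordering.

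The most delicate point is the final assertion $n_k - n_{k-1} = 1$. By the genus detection theorem, the top exponent satisfies $n_k = g(K)$ and the leading coefficient equals $+1$; what must be excluded is the possibility that the second-highest non-zero term occurs at exponent $g(K) - j$ with $j \ge 2$. Following Hedden--Watson, I would invoke the staircase description of $CFK^\infty(K)$ for an $L$-space knot and argue that the uppermost step must have length exactly one: any longer horizontal gap at the top would yield an extra generator in $\widehat{HF}$ of a large positive surgery, contradicting the $L$-space hypothesis. This rigidity claim is the main obstacle, since unlike the earlier steps it is not a consequence of the Euler characteristic alone and requires the full combinatorial content of the staircase model of $CFK^\infty(K)$.
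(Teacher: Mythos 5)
The paper does not actually prove this lemma: it is quoted verbatim from the literature, with the first assertion (alternating coefficients, symmetric exponents) cited to Ozsv\'ath--Szab\'o and the final assertion $n_k-n_{k-1}=1$ cited to Hedden--Watson. Your reconstruction of the first part is a faithful sketch of the Ozsv\'ath--Szab\'o argument: the rank-at-most-one statement for $\widehat{HFK}$ in each Alexander grading, the Euler characteristic identity, the parity alternation forced by the single-tower structure of large-surgery Floer homology, and the symmetry $n_i=-n_{-i}$ from $\Delta_K(t)=\Delta_K(t^{-1})$ are all the right ingredients, stated at an acceptable level of detail for a citation-level result.

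The genuine gap is in your argument for $n_k-n_{k-1}=1$. You claim that a top gap of length $\ge 2$ in the staircase ``would yield an extra generator in $\widehat{HF}$ of a large positive surgery, contradicting the $L$-space hypothesis.'' This mechanism does not work: an abstract staircase complex with arbitrary step lengths still satisfies $H_*(\widehat{A}_s)\cong \mathbb{F}$ for every $s$, so large surgeries computed from it are formally $L$-spaces. For instance, the three-generator complex modelled on $\Delta(t)=t^2-1+t^{-2}$, with generators $a,b,c$ in Alexander gradings $2,0,-2$ and differential $\partial b = U^2a+c$, has rank-one homology in every subquotient $\widehat{A}_s=C\{\max(i,j-s)=0\}$, exactly as for a genuine $L$-space knot; no extra tower or extra $\widehat{HF}$ generator appears. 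This is precisely why $n_k-n_{k-1}=1$ is not part of the original Ozsv\'ath--Szab\'o theorem and had to be proved separately by Hedden--Watson: it is a \emph{realizability} constraint on which staircases can occur as $CFK^\infty$ of an actual knot in $\mathbb{S}^3$ (in the same spirit as the Krcatovich inequality recorded in Equation (7) of this paper), and its proof requires input beyond the $L$-space condition routed through the large-surgery formula. As written, your final step would ``prove'' a statement that is false for abstract staircase complexes, so the rigidity claim you flag as the main obstacle is indeed where the argument breaks down, and you would need to import the Hedden--Watson argument (or an equivalent, e.g.\ nonvanishing of $\widehat{HFK}(K,g-1)$ for fibered knots) rather than derive it from the surgery formula.
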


Roughly speaking, since the coefficient associated to $t^{n_i}$ is either 1 or $-1$ it is not hard to compute the torsion coefficients $t_j$ using Equation (\ref{equation:torsion_coefficient}). We provide a general algorithm to do it.

We always assume in the following that $k=2h$ is even. The odd case is similar and we do not discuss it here.
Define integers $r_i$ (for $i=1,{\cdots}, k$) as follows:
\begin{equation}
    r_i=n_{k+2-2i}-n_{k+1-2i}.
\end{equation}
The coefficient $r_i$ encodes a "jump" in the Alexander polynomial. By this we mean that $r_i$ determines the distance between two consecutive exponents $n_{k+2-2i}$ and $n_{k+1-2i}$ in $\Delta_K(t)$. 

The jumps that are described by the $r_i$'s are actually every second step. However since the polynomial is symmetric the coefficients $r_i$ are enough to compute all the exponents appearing in $\Delta_K(t)$. In other words $r:=(r_1,{\cdots},r_k)$ \emph{uniquely determines} $\Delta_K(t)$.
\begin{remark}
We point out that there is a result \cite[Theorem 1.5]{Kractovich} that imposes additional conditions on the $r_i$'s for an $L$-space knot. As such, not all vectors $r=(r_i)_i$ can arise from the Alexander polynomial of an $L$-space knot.

More precisely, if $\Delta_K(t)$ is the symmetrized Alexander polynomial of an $L$-space knot $K$ with $n_i$ and $r_i$ as above, then we have the following additional inequality for the values of $r_i$:
    \begin{equation}\label{equation:inequality_r_i}
        \sum_{i=2}^j r_i \le \sum_{i=k-j+2}^k r_i
    \end{equation}
    for any $j \le k$.
\end{remark}
Back to our main question, by definition of the torsion coefficients $t_{j} =0$ for $j \ge g(K)$. We need to understand when and how the torsion coefficients start to change. Using a recursive formula for torsion coefficients we derive an expression for $t_j(K)$ that is written in terms of the $r_i$. To help the reader visualise our computations we will end the discussion with a worked out example.

We first define the following quantities:
\begin{align*}
    &A_j=\sum_{i=1}^j r_i, \\
    &B_j=\sum _{i=k-j+1}^k r_i, \\
    &C_l= \sum_{i=k-l+2}^k r_i, \\
\end{align*}
for $j=1,\cdots,h$ and $l=2,\cdots, h$. We also denote:
\begin{align*}
     &a_j= g(K) - (A_j+ B_j)\\
    &b_l=g(K)-(A_l+C_l)
\end{align*}
for $j=1,\cdots,h$ and $l=2, \cdots, h$. We artificially impose $a_0=g(K)$ and $b_1=g(K)-1$.

\begin{lemma}\label{lemma:difference_torsion_coefficients}
Let $K$ be an $L$-space knot with genus $g(K)$ and symmetrized Alexander polynomial described by $r=(1,...,r_k)$ with $k=2h$. Let  $a_i,b_i$ be defined as above. Then the difference between the torsion coefficients for $j \in \{1,\cdots,g(K)\}$ is given by the following equation:
    \begin{equation}\label{formula:general_difference}
    t_{j-1}(K)-t_j(K)= \sum_{i=0}^{h-1} \chi_{(b_{h-i},a_{h-i-1}]}(j).
\end{equation}

\end{lemma}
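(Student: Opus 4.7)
The plan is to reduce the identity (\ref{formula:general_difference}) to a telescoping alternating sum over the exponents of $\Delta_K(t)$, and then to repackage the resulting intervals using the quantities $a_j$ and $b_l$ via a careful bookkeeping of the jumps $r_i$.

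First I would establish the general recursion
\[
t_{j-1}(K) - t_j(K) \;=\; \sum_{m \ge j} c_m
\]
directly from the definition (\ref{equation:torsion_coefficient}). This is a routine reindexing: writing both $t_{j-1}$ and $t_j$ as sums in a new variable $m$ (the index of $c$), the weights $j'$ telescope and the difference collapses to the tail $c_j + c_{j+1} + \cdots$.

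Next, by Lemma \ref{lemma:L-space-Alexander-Polynomial} with $k = 2h$, we have $c_{n_l} = (-1)^{k+l} = (-1)^l$ and $c_m = 0$ for every other $m$. For $j \ge 1$ only indices $l \ge 1$ contribute (since $n_0 = 0$ and $n_{-l} < 0$), so
\[
t_{j-1}(K) - t_j(K) \;=\; \sum_{\substack{l \ge 1 \\ n_l \ge j}} (-1)^l.
\]
Because the $n_l$ are strictly increasing, the index set is a top segment $\{l_0, l_0+1, \ldots, k\}$. The alternating sum over such a segment equals $1$ when $l_0$ has the same parity as $k$ (i.e.\ $l_0$ is even) and $0$ otherwise. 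Hence the difference is either $0$ or $1$, and equals $1$ precisely when $j$ lies in an interval of the form $(n_{k-2m-1},\, n_{k-2m}]$ for some $m \in \{0,1,\ldots,h-1\}$.

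The remaining task is to identify $n_{k-2m}$ with $a_m$ and $n_{k-2m-1}$ with $b_{m+1}$. For this I would read off the gaps in the sequence $n_k > n_{k-1} > \cdots > n_1$: the gaps between $n_{k-2l}$ and $n_{k-2l-1}$ are $r_1, r_2, \ldots, r_h$ by definition, while the intermediate gaps between $n_{k-2l+1}$ and $n_{k-2l}$ are $r_{2h}, r_{2h-1}, \ldots, r_{h+1}$ by the symmetry $n_i = -n_{-i}$ (each such gap is rewritten as a gap between consecutive $n_l$ of negative index, which is again one of the $r_i$). Summing the first $2m$ of these gaps yields $n_{k-2m} = g(K) - A_m - B_m = a_m$, and the next gap is $r_{m+1}$, so $n_{k-2m-1} = a_m - r_{m+1}$. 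A short algebraic check using $C_{m+1} = B_m$ and $A_{m+1} = A_m + r_{m+1}$ gives $a_m - r_{m+1} = b_{m+1}$, with the boundary values $a_0 = g(K)$ and $b_1 = g(K)-1$ absorbing the case $m=0$. Reindexing $i = h-1-m$ then converts $\sum_{m=0}^{h-1}\chi_{(b_{m+1},\,a_m]}(j)$ into the formula in the statement.

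The main obstacle I anticipate is precisely this last bookkeeping step: one has to be confident that the gaps between consecutive positive exponents weave together the two halves $(r_1,\ldots,r_h)$ and $(r_{h+1},\ldots,r_{2h})$ of the vector $r$ in exactly the right order. This is where the symmetry $n_i = -n_{-i}$ and the fact that the $r_i$ directly record only every second gap become crucial, and I expect this identification --- rather than the telescoping or the parity argument --- to require the most care to present cleanly.
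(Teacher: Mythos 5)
Your proposal is correct and follows essentially the same route as the paper: the recursion $t_{j-1}-t_j=\sum_{i\ge j}c_i$ combined with locating the $\pm 1$ coefficients of $\Delta_K$ relative to the intervals with endpoints $a_m=n_{k-2m}$ and $b_{m+1}=n_{k-2m-1}$. Your version simply makes explicit the parity/alternating-tail computation and the gap-weaving identification that the paper's proof leaves implicit.
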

Here $\chi$ denotes the characteristic function.
In words, the torsion coefficient increases if and only if $j$ is contained in one of the (open on the left) intervals $(b_{h-i}, a_{h-i-1}]$. Note that the family of intervals $\bigsqcup_{i=0}^{h-1}(b_{h-i}, a_{h-i-1}]$ joint with the family of complementary intervals $\bigsqcup_{l=0}^{h-1} (a_{h-l}, b_{h-l}]$ covers $(0, g(K)]$.

Figure \ref{fig:graphic} visually represents the result in Lemma \ref{lemma:difference_torsion_coefficients}. The red dots correspond to the values of the torsion coefficients $t_j$.
\begin{proof}[Proof of Lemma \ref{lemma:difference_torsion_coefficients}]
    By definition of the torsion coefficients of $K$, we have the following recursive formula:
    \begin{equation}\label{formula:recursive}
        t_{j-1}(K)-t_j(K)= \sum_{i \ge j}c_i(K),
    \end{equation} where we recall that $c_i(K)$ is the coefficient of $t^i$ in $\Delta_K(t)$.

The result is obtained using Equation (\ref{formula:recursive}) and noticing the following:
each interval \\ $(b_{h-i}, a_{h-i-1}]$ contains exactly one index for which the corresponding coefficient in the Alexander polynomial is $1$ (i.e. the index $a_{h-i-1}$) and no indices corresponding to negative coefficients. On the other hand, each one of the complementary intervals behaves the opposite way -  it contains exactly one index corresponding to $-1$ coefficient (i.e. the $b_{h-l}$) and no indices corresponding to positive coefficients. Starting from $a_0=g(K)$ and $b_1=g(K)-1$, with $c_{a_0}(K)=1$, we obtain the desired equation by recursively applying Equation (\ref{formula:recursive}). 
\end{proof}

\begin{figure}[htb]
\centering
\begin{tikzpicture}
%\draw[step=1cm,color=gray] (0,0) grid (14,11);%Uncomment this to get some helpful grid lines
\node[anchor=south west,inner sep=0] at (0.5,0.5){\includegraphics[width=13cm]{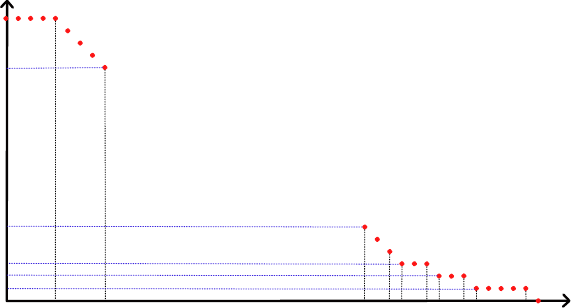}};
\node at (0.3,7.1) {$A_h$};
\node at (0.2,6.1) {$A_{h-1}$};
\node at (0.4,0.9) {$1$};
\node at (0.4,1.3) {$A_2$};
\node at (0.4,1.65){$A_3$};
\node at (12.8,0.37) {$a_0$};
\node at (12.4,0.4) {$b_1$};
\node at (11.5,0.37) {$a_1$};
\node at (11.1,0.37) {$b_2$};
\node at (10.6,0.37) {$a_2$};
\node at (10.2,0.4) {$b_3$};
\node at (1.9,0.4){$b_h$};
\node at (3,0.4) {$a_{h-1}$};
\node at (0.65,0.4) {$0$};
\node at (6,4.5){$\cdots$};
\end{tikzpicture}
\caption{A visual representation of the torsion coefficients associated to $K$. To each red dot at $x=j$ is associated the value of $t_j$.}
\label{fig:graphic}
\end{figure}

Lemma \ref{lemma:difference_torsion_coefficients} provides a way to compute the torsion coefficients starting from the vector $r$. 
\begin{lemma}\label{lemma:general_torsion_coefficients}
    Let $K$ be an $L$-space knot with genus $g(K)$ and symmetrized Alexander polynomial described by $r=(1,...,r_k)$ with $k=2h$. Let $j \in \{0, \cdots,g(K)-1\}$. Then :
    \begin{equation}\label{equation:torsion_coefficient_general}
        t_j(K)= \sum_{l=j+1}^{g(K)} (\sum_{i=0}^{h-1} \chi_{(b_{h-i},a_{h-i-1}]}(l)).
    \end{equation}
\end{lemma}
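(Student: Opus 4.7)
The proof should follow almost immediately by telescoping the formula in Lemma \ref{lemma:difference_torsion_coefficients}, once we establish the boundary condition at $j = g(K)$.

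First I would observe that the symmetrized Alexander polynomial of $K$ has support contained in $[-g(K), g(K)]$, so $c_i(K) = 0$ for all $i > g(K)$. From the definition of the torsion coefficient in Equation (\ref{equation:torsion_coefficient}), this immediately gives $t_{g(K)}(K) = \sum_{j > 0} j\, c_{g(K) + j}(K) = 0$. This will serve as the anchor for the telescoping sum.

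Next I would write the standard telescoping identity:
\begin{equation*}
t_j(K) = t_{g(K)}(K) + \sum_{l = j+1}^{g(K)} \bigl(t_{l-1}(K) - t_l(K)\bigr).
\end{equation*}
Substituting $t_{g(K)}(K) = 0$ and applying Lemma \ref{lemma:difference_torsion_coefficients} termwise to each difference $t_{l-1}(K) - t_l(K)$ yields exactly Equation (\ref{equation:torsion_coefficient_general}).

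There is essentially no obstacle here: the only small thing to verify is that Lemma \ref{lemma:difference_torsion_coefficients} is applicable for every index $l \in \{j+1, \dots, g(K)\}$, which follows since the hypothesis of that lemma covers precisely $l \in \{1, \dots, g(K)\}$, and we have $j \ge 0$ so $j+1 \ge 1$. Because the plan is purely a telescoping argument, I would keep the write-up to a single short paragraph; the only nuance worth flagging to the reader is the vanishing $t_{g(K)}(K) = 0$, which is why the sum is truncated at $g(K)$ rather than extended further.
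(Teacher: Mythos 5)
Your proposal is correct and matches the paper's proof: both telescope the differences from Lemma \ref{lemma:difference_torsion_coefficients} down from the anchor $t_{g(K)}(K)=0$, your sum $\sum_{l=j+1}^{g(K)}(t_{l-1}-t_l)$ being just a reindexing of the paper's $\sum_{i=j}^{g(K)-1}(t_i-t_{i+1})$. The only difference is that you spell out why $t_{g(K)}(K)=0$, which the paper takes as known.
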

\begin{proof}
    We have an expression for $t_{j-1}(K)-t_j(K)$ for all $j$ and we know that $t_{g(K)}=0$.
   Note that $t_j= \sum_{i=j}^{g(K)-1}(t_i-t_{i+1})$. Simply plug in the expression in Lemma \ref{lemma:difference_torsion_coefficients} to conclude.
\end{proof}

 From Lemma \ref{lemma:general_torsion_coefficients} it is evident for example that $t_0(K)= \sum_{i=1}^{h}r_i=A_h$. This is the maximum value for the torsion coefficients. In general, Lemma \ref{lemma:general_torsion_coefficients} shows that $t_{a_i}(K)=A_i$ for $i=0,{\cdots},k$.

Now we can use Theorem \ref{theorem:OwensStrle2} to give a general formula for the $d$-invariants.

\begin{theorem}\label{theorem:general_d_invariants}
    Let $K$ be an $L$-space knot with genus $g(K)$ and symmetrized Alexander polynomial described by $r=(1,...,r_k)$ with $k=2h$.
    Let $n \in \mathbb{N}$ with $n = 2g(K)+(m-1)$, $m \ge 0$. 
    
    For all $|j| \in \{0 , \cdots, g(K)-1\}$ we have:
    \begin{equation}\label{equation:first_equation}
        d(K(n),j)=\frac{(n-2|j|)^2}{4n}-\frac{1}{4}-2\sum_{l=|j|+1}^{g(K)} (\sum_{i=0}^{h-1} \chi_{(b_{h-i},a_{h-i-1}]}(l)).
    \end{equation}
    If $|j| \in \{g(K), \cdots, g(K)+ \lfloor \frac{m-1}{2}\rfloor\}$ then:
    \begin{equation}\label{equation:second_equation}
        d(K(n),j)= \frac{(n-2|j|)^2}{4n}-\frac{1}{4}.
    \end{equation}

\end{theorem}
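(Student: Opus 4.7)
The plan is to assemble the formula directly from the two ingredients already established: the Owens--Strle formula in Theorem \ref{theorem:OwensStrle2}, which expresses $d(K(n),j)$ as
\[
d(K(n),j)=\frac{(n-2|j|)^2}{4n}-\frac{1}{4}-2t_{|j|}(K),
\]
and the combinatorial expression for $t_{|j|}(K)$ from Lemma \ref{lemma:general_torsion_coefficients}. The statement to prove then splits along the natural dichotomy $|j|<g(K)$ vs.\ $|j|\ge g(K)$, corresponding to whether the torsion coefficient is nontrivial or vanishes.

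First I would verify that the indices $|j|$ in both cases genuinely lie in the admissible range $|j|\le n/2$ in which Theorem \ref{theorem:OwensStrle2} applies. Writing $n=2g(K)+(m-1)$, we have $n/2=g(K)+(m-1)/2$, so the upper bound $g(K)+\lfloor(m-1)/2\rfloor$ in the second case is at most $n/2$, and the first case $|j|\le g(K)-1$ is clearly admissible. This confirms that every spin$^c$ index appearing in the statement is legitimate.

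Next, for the first regime $|j|\in\{0,\dots,g(K)-1\}$, I would substitute the formula of Lemma \ref{lemma:general_torsion_coefficients},
\[
t_{|j|}(K)=\sum_{l=|j|+1}^{g(K)}\Bigl(\sum_{i=0}^{h-1}\chi_{(b_{h-i},a_{h-i-1}]}(l)\Bigr),
\]
directly into the Owens--Strle formula to obtain Equation (\ref{equation:first_equation}). This step is purely a substitution and carries no hidden content.

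For the second regime $|j|\in\{g(K),\dots,g(K)+\lfloor(m-1)/2\rfloor\}$, the key observation is that $t_{|j|}(K)=0$. This follows because $c_i(K)=0$ for $i>g(K)$ (the symmetrized Alexander polynomial of a knot of genus $g(K)$ has degree exactly $g(K)$), so by the recursive relation (\ref{formula:recursive}) all torsion coefficients beyond $g(K)$ vanish; alternatively one may read this off from Lemma \ref{lemma:general_torsion_coefficients} directly, whose summation is empty for $|j|\ge g(K)$. Plugging $t_{|j|}(K)=0$ into the Owens--Strle formula yields Equation (\ref{equation:second_equation}).

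There is no genuine obstacle here; the result is essentially a packaging of the previous two statements. The only point requiring mild care is the index accounting in the range $|j|\ge g(K)$, namely checking that $\lfloor(m-1)/2\rfloor$ is indeed the correct upper bound so that $|j|\le n/2$ remains valid, which is why the floor function appears in the statement. Everything else is a direct substitution.
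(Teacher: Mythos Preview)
Your proposal is correct and matches the paper's approach exactly: the paper does not even supply a separate proof for this theorem, treating it as an immediate consequence of substituting the torsion coefficients from Lemma~\ref{lemma:general_torsion_coefficients} (and their vanishing for $|j|\ge g(K)$) into the Owens--Strle formula of Theorem~\ref{theorem:OwensStrle2}. Your additional care in verifying that the index range $|j|\le g(K)+\lfloor (m-1)/2\rfloor$ stays within the admissible bound $|j|\le n/2$ is a welcome clarification that the paper leaves implicit.
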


Although it would be a stretch to define the first equation in Theorem \ref{theorem:general_d_invariants} as easy to visualize, it is important to notice that it is a quite simple linear expression. Once the vector $(r_1, \cdots, r_k)$ is known it is then straightforward to use it to compute the desired values. 

To help the reader we now discuss an easy example to apply our calculations. Let $K$ be the Pretzel knot $P(-2,3,11)$. This is an $L$-space knot whose symmetrized Alexander polynomial is given by: $$\Delta_K(t)= t^7-t^6+t^4-t^3+t^2-t+1-t^{-1}+t^{-2}-t^{-3}+t^{-4}-t^{-6}+t^{-7}.$$
In particular, the vector $r$ describing $\Delta_K(t)$ is given by $r=(1,1,1,1,1,2)$ where in this case $k$ (the length of the vector) is 6. We have $a_0=7$, $a_1=4$, $a_2=2$, $a_3=0$ and $b_1=6$, $b_2=3$, $b_3=1$. 

Using Equation (\ref{equation:torsion_coefficient_general}) we can write down the values of $t_j$ for $j=0,1,\cdots,7$. Clearly $t_7=0$. In general, $$t_j(K)= \sum_{l=j+1}^7 ( \sum_{i=0}^{2} \chi_{(b_{3-i}, a_{2-i}]}(l)).$$ 

We can summarise everything in a graphic, see Figure \ref{fig:example_graphic}.

\begin{figure}[htb]
\centering
\begin{tikzpicture}
%\draw[step=1cm,color=gray] (0,0) grid (8,7);%Uncomment this to get some helpful grid lines
\node[anchor=south west,inner sep=0] at (0.5,0.5){\includegraphics[width=8cm]
{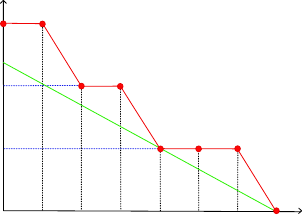}};

    \end{tikzpicture}
\caption{This graphic shows the values of $t_j(K)$. The red line is the linear interpolation between the points, the green line is the function $h(j)=-\frac{1}{3}j+\frac{7}{3}$.}
\label{fig:example_graphic}
\end{figure}

Now we can return to our main discussion. As promised at the beginning, once we have computed the $d$-invariants we can obtain a general obstruction for weak fillability.

\begin{theorem:general_obstruction}
    Let $K$ be an $L$-space knot of genus $g(K)$ and symmetrized Alexander polynomial described by $r=(1,...,r_k)$.
    Let $n \in \mathbb{N}$ with $n \ge 2g(K)-1$. Suppose $n$ is square-free. Then the $d$-invariants can be computed as in Theorem \ref{theorem:general_d_invariants} in terms of $r$ and $n$.
    If they fail to satisfy the inequalities in Theorem \ref{theorem:OwensStrle}, then the manifold $K(n)$ has no weakly symplectically fillable contact structures.
\end{theorem:general_obstruction}

\begin{remark}
Note that as shown in Figure \ref{fig:example_graphic} in our example we can easily obtain an inequality for the $d$-invariants by considering the function $h(j)=-\frac{1}{3}j+\frac{7}{3}$. More precisely, we get: $$d(K(13),j) \le \frac{(13-2j)^2}{44}- \frac{1}{4} +\frac{2}{3}j-\frac{14}{3}.$$
Note $n=2g(K)-1=13$ it is square-free. The above inequality implies that: $$d(K(13),j) \le \frac{-142+4j^2-68j}{132}\le 0$$ for $j=0,\cdots, 7$. Hence we can apply Theorem \ref{theorem:general_obstruction} and conclude that $K(13)$ has no weakly simplectically fillable contact structures. This was already obtained in \cite{YoulinYajing}.
\end{remark}

In the following we call a value for the $d$-invariant \emph{weak} if :$$4d(K(n),t)\ge 
\begin{cases}
   1- \frac{1}{n} & \text{ if } n \text{ is odd or} \\
     1   & \text{ if } n \text{ is even.}
\end{cases}$$
 We say it is \emph{non weak} if such inequality is not satisfied. The terminology is used in relation to the absence of weakly symplectic fillings.

According to Theorem \ref{theorem:general_obstruction}, we want all the $d$-invariants to be non weak in order to obstruct fillability. On the other hand, if some of the invariants are weak then our strategy cannot detect the absence of fillings.

Focusing on the second equation in Theorem \ref{theorem:general_d_invariants} we can obtain an interesting bound on the surgery coefficient $n$ that depends only on the genus of the knot. 

\begin{proposition}\label{lemma:bound_n}
    Let $n=2g(K)+(m-1)$, with $m \ge 1$. Then the $L$-space $K(n)$ has non weak $d$-invariants \emph{only if}:
    \begin{equation}
        \begin{cases}
            &(m-2)^2 <4g(K) \text{ if } m \text{ is even }\\
            &(m-1)(m-3) < 4g(K) \text{ if } m \text{ is odd }.
        \end{cases}
    \end{equation}
\end{proposition}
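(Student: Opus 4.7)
The plan is to notice that the proposition is an ``only if'' statement, so it suffices to single out one $d$-invariant in the range governed by the second formula of Theorem \ref{theorem:general_d_invariants} whose non-weakness already forces the claimed inequality; the natural choice is $|j| = g(K)$, the left endpoint of that range.

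First I would plug $|j| = g(K)$ into the second formula, obtaining
$$d(K(n), g(K)) = \frac{(n - 2g(K))^2}{4n} - \frac{1}{4} = \frac{(m-1)^2}{4n} - \frac{1}{4},$$
so that $4 d(K(n), g(K)) = (m-1)^2/n - 1$. Before moving on I would verify that this is actually the largest value of $4d$ attained by the second formula: since $(n - 2|j|)^2$ is minimized at $|j| = n/2 = g(K) + (m-1)/2$ and grows as $|j|$ moves away, on the integer range $\{g(K), \dots, g(K) + \lfloor (m-1)/2 \rfloor\}$ the maximum is at $|j| = g(K)$. Hence the hypothesis that every $d$-invariant is non-weak forces in particular
$$\frac{(m-1)^2}{n} - 1 < \begin{cases} 1 - \frac{1}{n}, & n \text{ odd}, \\ 1, & n \text{ even}. \end{cases}$$

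Next I would split on the parity of $n = 2g(K) + (m-1)$, which agrees with the parity of $m-1$. When $m$ is even (so $n$ is odd), the inequality rearranges to $(m-1)^2 < 2n - 1 = 4g(K) + 2m - 3$, which after expanding and collecting becomes $(m-2)^2 < 4g(K)$. When $m$ is odd (so $n$ is even), it becomes $(m-1)^2 < 2n = 4g(K) + 2m - 2$, which rewrites as $(m-1)(m-3) < 4g(K)$. These are exactly the two bounds in the statement.

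The only real obstacle is bookkeeping: justifying that $|j| = g(K)$ indeed produces the sharpest constraint among the indices covered by the second formula, and then carrying out the elementary algebraic simplifications in each parity case. There is no conceptual difficulty beyond selecting the correct $|j|$ at which to test the Owens--Strle inequality from Theorem \ref{theorem:OwensStrle}.
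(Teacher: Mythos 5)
Your proposal is correct and takes essentially the same approach as the paper: evaluate the second formula of Theorem \ref{theorem:general_d_invariants} at $|j|=g(K)$ (which the paper likewise identifies as the maximizing index in that range), impose the non-weakness inequality, and split on the parity of $n=2g(K)+(m-1)$ to get $(m-2)^2<4g(K)$ and $(m-1)(m-3)<4g(K)$. The only difference is cosmetic: you justify why $|j|=g(K)$ maximizes the second formula, a point the paper asserts without comment (and which is not strictly needed for the ``only if'' direction anyway).
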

\begin{proof}
    From Equation (\ref{equation:second_equation}) the maximum value that $d(K(n),j)$ assumes \\ 
    for $j \in \{g(K), \cdots, g(K) + \lfloor \frac{m-1}{2} \rfloor\}$ corresponds to $j=g(K)$. 
    
\begin{itemize}
    \item If $m$ is even $n$ is odd. Thus for a $d$-invariant to be non weak we need:
    $$
    \frac{(2g(K)+m-1-2g(K))^2}{4(2g(K)+m-1)} - \frac{1}{4} < \frac{1}{4} - \frac{1}{4(2g(K)+m-1)} $$
    $$ (m-2)^2 < 4g(K). 
    $$
    \item If $m$ is odd $n$ is even. Thus for a $d$-invariant to be non weak we need:
    $$
    \frac{(2g(K)+m-1-2g(K))^2}{4(2g(K)+m-1)} - \frac{1}{4} < \frac{1}{4} $$ 
    $$ (m-1)(m-3) < 4g(K).
    $$
\end{itemize}

\end{proof}

This is just a rough upper bound. When considering Equation (\ref{equation:first_equation}) we are quite likely introducing sharper bounds.

\begin{remark}
    Theorem \ref{theorem:general_obstruction} does not imply that if the inequalities are not satisfied the resulting surgered manifold has weak symplectic fillings - we just cannot detect their absence with our tools.
\end{remark} 

There is also another interesting result that can be obtained using a rough lower bound on the torsion coefficients. Remember we are considering an $L$-space knot $K$ and symmetrized Alexander polynomial described by $(r_1,{\cdots}, r_k)$ with $k$ even.

\begin{proposition}\label{lemma:rough_estimate}
    Let $n=2g(K)-1$. Let $i_{min}\in \{1,{\cdots},h\}$ be the index minimising the quantity $\frac{A_i}{g(K)-a_i}$. Then the $L$-space $K(n)$ has non weak $d$-invariants if: $$a_{i_{min}}+4A_{i_{min}} \ge g(K),$$
    i.e. if the minimum value of $\frac{A_i}{g(K)-a_i}$ is bigger than $\frac{1}{4}$.
\end{proposition}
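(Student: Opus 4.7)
The plan is to promote the discrete identities $t_{a_i}(K)=A_i$ (observed after Lemma~\ref{lemma:general_torsion_coefficients}) to a global linear lower bound on the torsion coefficients, and then plug it into the $d$-invariant formula in Equation~(\ref{equation:first_equation}). Since $n=2g(K)-1$ is odd and the range of spin$^c$ indices is $|j|\in\{0,\dots,g(K)-1\}$, the required non-weak inequality reads
\[
\frac{(n-2j)^{2}}{4n}-\frac{1}{4}-2t_{j}(K)\;<\;\frac{1}{4}-\frac{1}{4n}
\]
for every such $j$.

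The first step is to establish
\[
t_{j}(K)\;\ge\;R\,(g(K)-j),\qquad R:=\frac{A_{i_{\min}}}{g(K)-a_{i_{\min}}},
\]
for all $j\in\{0,\dots,g(K)\}$. Unpacking Lemma~\ref{lemma:general_torsion_coefficients}, the sequence $t_{j}(K)$ is piecewise affine in $j$: constant equal to $A_{i+1}$ on each no-jump range $(a_{i+1},b_{i+1}]$ and decreasing by one per integer step, from $A_{i+1}$ down to $A_{i}$, on each jump range $(b_{i+1},a_{i}]$. Using the identity $g(K)-a_{i_{\min}}=A_{i_{\min}}+B_{i_{\min}}$ with $B_{i_{\min}}\ge 1$, one has $R<1$. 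Consequently $t_{j}-R(g(K)-j)$ has slope $R>0$ on every no-jump piece and slope $R-1<0$ on every jump piece, so its minimum on $[0,g(K)]$ is attained at some corner $j=a_{i}$, where the defining minimality of $R$ forces $A_{i}\ge R(g(K)-a_{i})$.

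The second step substitutes this bound into Equation~(\ref{equation:first_equation}). With the change of variables $u:=g(K)-j\in\{1,\dots,g(K)\}$ and $n=2g(K)-1$, the desired strict inequality becomes the quadratic condition
\[
u^{2}-\bigl(1+2R(2g(K)-1)\bigr)u-g(K)+1\;<\;0,
\]
which is a parabola in $u$ opening upward, so its maximum on $[1,g(K)]$ is attained at an endpoint. At $u=1$ the value $1-g(K)-2R(2g(K)-1)$ is negative for every $g(K)\ge 1$ and $R>0$. At $u=g(K)$ the expression equals $(g(K)-1)^{2}-2Rg(K)(2g(K)-1)$, and under the hypothesis $R\ge 1/4$ this is bounded above by $-3g(K)/2+1<0$ for every $g(K)\ge 1$. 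The main obstacle is the first step: the hypothesis controls only the discrete ratios $A_{i}/(g(K)-a_{i})$, and one must propagate this information to a pointwise estimate at every integer $j$ via the piecewise structure of $t_{j}(K)$; the subsequent quadratic computation is then essentially mechanical.
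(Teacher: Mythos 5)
Your proof is correct and follows essentially the same route as the paper: bound the torsion coefficients from below by the linear function $R\,(g(K)-j)$ with $R=\frac{A_{i_{\min}}}{g(K)-a_{i_{\min}}}$, then use convexity to check the resulting bound on the $d$-invariants only at the endpoints $j=0$ and $j=g(K)$. The only difference is cosmetic: you actually justify the pointwise estimate $t_j(K)\ge R\,(g(K)-j)$ via the piecewise slopes and the corner values $t_{a_i}=A_i$ (a step the paper merely asserts by inspection of the graphic), and you verify the non-weak inequality directly rather than showing $d\le 0$ as the paper does; both computations are equivalent.
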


\begin{proof}
    The idea is to bound the values of the $d$-invariants of $K(n)$ from below using an adequate function. Let $i_{min} \in \{0,{\cdots},k\}$ be the index minimising the quantity $\frac{A_i}{g(K)-a_i}$. Then the function:$$h(j)= \left( -\frac{A_{i_{min}}}{g(K)-a_{i_{min}}}\right ) j +\left( \frac{A_{i_{min}}}{g(K)- a_{i_{min}}}\right )g(K)$$ satisfies $h(j) \le t(j)$ for $j \in [0,g(K)]$ (see Figure \ref{fig:example_graphic} for an example of this function). 
    Here $t(j)$ is the linear interpolation between the values of $t_j$.
    Then for $j=0,{\cdots},g(K)$ we have $d(K(n), j) \le f(j)$, where $f(j)=\frac{(2g(K)-1-2j)^2}{4(2g(K)-1)}-\frac{1}{4}-2h(j)$. 
    $f(j)$ is a decreasing function from $0$ to $j=\left( 2g-1 \right) \left( \frac{1}{2}-\frac{A_{i_{min}}}{g(K)-a_{i_{min}}} \right)$ and then it increases until $j=g(K)$. We will compute the values of $f(j)$ at $j=0,g(K)$ and show these are negative. 
    \begin{align*}
        f(0)&= \frac{2g(K)-1}{4}-\frac{1}{4}-2\left( \frac{A_{i_{min}}}{g(K)- a_{i_{min}}}\right )g(K)\\
        &= \frac{g(K)-1}{2}-2g(K)\left( \frac{A_{i_{min}}}{g(K)- a_{i_{min}}}\right )\\
        &= \frac{g(K)\left (1-4\left( \frac{A_{i_{min}}}{g(K)- a_{i_{min}}}\right ) \right)-1}{2}\\
        &=\frac{g(K)\left ( \frac{g(K)-a_{i_{min}}-4A_{i_{min}}}{g(K)-a_{i_{min}}}\right)-1}{2}
    \end{align*}
    Since we are imposing $a_{i_{min}}+4A_{i_{min}} \ge g(K)$, we obtain that $f(0) <0$. Moreover $f(g(K))= \frac{1}{4(2g(K)-1)}-\frac{1}{4}$ which is negative as well.
    Thus $f(j) \le 0$ for all $j \in [0,g(K)]$, which implies that $d(K(n),j) \le 0$ for $j \in [0,g(K)]$. In other words, the $d$-invariants of $K(n)$ are non weak.

\end{proof}
\begin{remark}
    In our toy example with the pretzel knot $K=P(-2,3,11)$ the minimum value of $\frac{A_i}{g(K)-a_i}$ is $\frac{1}{3}$. In particular the inequality in Proposition \ref{lemma:rough_estimate} is satisfied so we can immediately conclude that $K(11)$ has non weak $d$-invariants and hence it has no weakly symplectic fillings. Note that it is quite easy to tell whether Proposition \ref{lemma:rough_estimate} can be applied once we have the graphic representation of the torsion coefficients.
\end{remark}

So far we only considered integral surgery coefficients. We now expand our results to rational coefficients as well. For this we need the following theorem.
\begin{theorem}[{\cite[Theorem 1.4]{OwensStrle2}}]\label{theorem:owensstrlerationals}
    For all rational numbers $r > m(K)$, the $r$-surgered manifold $K(r)$ bounds a negative-definite manifold $X_r$, with $H^2(X_r) \to H^2(K(r))$ surjective, where $m(K)$ is:
    \begin{equation*}
        \text { inf }\{r \in \mathbb{Q}_{\ge 0}|K(r) \text{ bounds a negative-definite 4-manifold}\}.
    \end{equation*}
\end{theorem}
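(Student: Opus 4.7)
The plan is to propagate negative-definite fillings from smaller to larger positive rational slopes by a handle attachment argument. By the definition of $m(K)$ as an infimum, for any rational $r>m(K)$ there exists a rational $r_0$ with $m(K)\le r_0<r$ together with a negative-definite $4$-manifold $W_0$ such that $\partial W_0=K(r_0)$. My goal is to build $X_r$ from $W_0$ by attaching $2$-handles that implement the change of surgery coefficient from $r_0$ to $r$, and then to check the intersection form and cohomology condition.

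For the construction, I would use the slam-dunk relation to write $K(r)$ as surgery on the link $K\cup m$ in $S^3$, where $m$ is a meridian of $K$, with coefficients $r_0$ on $K$ and $q=1/(r_0-r)$ on $m$. Expanding $q$ via a Hirzebruch--Jung negative continued fraction converts $q$-surgery on $m$ into integer surgery on a linear chain $L=L_1\cup\cdots\cup L_n$ of unknots lying in $S^3\setminus K$, all with framings $\le -2$. After performing $r_0$-surgery on $K$, the chain $L$ sits in a collar of $\partial W_0$, so attaching the corresponding $2$-handles to $W_0$ yields a $4$-manifold $X_r$ with $\partial X_r=K(r)$.

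To conclude, I would verify negative-definiteness and the cohomological surjectivity. The new $2$-handles contribute a linear plumbing whose intersection form has all diagonal entries $\le -2$ and hence is negative definite by itself; combining it with $Q_{W_0}$ requires analysing the off-diagonal linking contributions between $m$ (with its push-offs into successive $L_i$) and classes in $W_0$. Since $K(r_0)$ is a rational homology sphere, the meridian $m$ is torsion in $H_1(W_0)$, so rationally these contributions vanish and an appropriate basis change makes the total form block diagonal, preserving negative-definiteness. Surjectivity of $H^2(X_r)\to H^2(K(r))$ should then follow from the long exact sequence of the pair $(X_r,\partial X_r)$: writing $r=p/q$, the generator of $H_1(K(r))=\mathbb{Z}/p\mathbb{Z}$ is realised by a meridian of one of the attached $2$-handles, and the cocores of the new handles provide the Poincar\'e duals giving the missing cohomology classes.

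The main obstacle I expect is controlling the linking terms between $m$ and pre-existing classes in $W_0$, both to ensure no positive eigenvalue is introduced in the enlarged intersection form and to verify cohomological surjectivity on the nose (not just rationally). This is governed by the specific continued fraction expansion chosen for $q$ and, if needed, by first replacing $r_0$ with a nearby slope via a Rolfsen twist so that the class of $m$ in $H_1(W_0;\mathbb{Z})$ is easier to manage. Once those combinatorial details are in place, the statement follows by direct inspection of the handle decomposition of $X_r$.
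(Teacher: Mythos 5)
First, a point of comparison: the paper does not prove this statement at all --- it is quoted verbatim as \cite[Theorem 1.4]{OwensStrle2} and used as a black box, so there is no internal proof to measure your argument against. Your strategy (pass from a filling $W_0$ of $K(r_0)$, $m(K)\le r_0<r$, to a filling of $K(r)$ by attaching a chain of $2$-handles coming from a negative continued fraction expansion, glued along the rational homology sphere $K(r_0)$) is indeed the standard route and is essentially how Owens and Strle argue in the cited source, so the outline is sound.

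However, as written the proposal leaves the two substantive verifications open. (a) A negative rational $q=1/(r_0-r)$ admits a continued fraction expansion with all entries $\le -2$ only when $q<-1$, i.e.\ $r-r_0<1$; this is harmless (choose $r_0$ close to $r$, using the infimum), but it must be said. (b) Your definiteness analysis is aimed at the wrong terms: the pairings between the new classes and $H_2(W_0;\mathbb{Q})$ vanish for the easy reason that a $W_0$-class can be pushed off the collar in which the rational cap of $m$ lives (this is just additivity of $b_2^{\pm}$ under gluing along a rational homology sphere). The real issue is the new block itself, which is \emph{not} the bare plumbing matrix: the attaching curves are only rationally null-homologous in $K(r_0)$, so the self-pairings of the capped cores acquire rational corrections coming from the linking form of $K(r_0)$ (e.g.\ the handle along the meridian $m$ has square $a_1$ shifted by a $q_0/p_0$-type term), and one must check that the corrected chain lattice is still negative definite; your sketch asserts negative definiteness of the framing matrix ``by itself,'' which is not the quantity that appears. (c) For surjectivity, the exact sequence of $(X_r,\partial X_r)$ together with duality reduces the claim to the vanishing of $H_1(K(r))\to H_1(X_r)$. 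The mechanism that makes this work is that the generator $\mu_K$ of $H_1(K(r))$ is isotopic in $K(r_0)$ to $m=L_1$, the attaching circle of the \emph{first} new handle, and hence bounds its core in $X_r$; by contrast, the meridians of the attached handles (dual to the cocores, as you propose) need not generate $H_1(K(r))$ --- e.g.\ $p_0\mu_K+q_0\mu_{L_1}=0$ only expresses a multiple of $\mu_{L_1}$ in terms of $\mu_K$ --- and nothing is assumed about $H^2(W_0)\to H^2(K(r_0))$, so a priori $\mu_K$ could survive into $H_1(W_0)$ under your argument. All three points are repairable, but until they are carried out the proposal does not establish either the negative definiteness of $X_r$ or the surjectivity statement.
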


We can combine this result with Theorem \ref{theorem:general_obstruction}. 
In particular, we obtain:

\begin{theorem}\label{theorem:rationals_obstruction}
    Let $K$ be an $L$-space knot of genus $g(K)$.
    Let $n \in \mathbb{N}$ with $n \ge 2g(K)-1$. Suppose $n$ is square-free. If $K(n)$ does not bound any negative-definite 4-manifold, then $K(q)$, $q \in \mathbb Q$ has no weak symplectic fillings for $q \in [2g(K)-1, n]$.
\end{theorem}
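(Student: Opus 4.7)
The plan is to promote the non-existence of a negative-definite filling of $K(n)$ to the entire rational interval $[2g(K)-1, n]$ by means of Theorem~\ref{theorem:owensstrlerationals}, and then combine this with the Ozsv\'ath--Szab\'o constraint that any symplectic filling of an $L$-space must be negative definite.

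First I would observe that the hypothesis forces $n \le m(K)$. Indeed, Theorem~\ref{theorem:owensstrlerationals} asserts that every rational $r > m(K)$ has the property that $K(r)$ bounds a negative-definite 4-manifold, so if $K(n)$ does not, then necessarily $n \le m(K)$. Now from the definition of $m(K)$ as an infimum, any rational slope $q < m(K)$ lies outside the set of slopes whose surgery bounds a negative-definite 4-manifold, hence $K(q)$ also fails to bound such a 4-manifold. Chaining $q < n \le m(K)$, every rational $q \in [2g(K)-1, n)$ therefore satisfies that $K(q)$ bounds no negative-definite 4-manifold, while the endpoint $q = n$ is covered directly by hypothesis.

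Next I would invoke two standard facts. First, since $K$ is an $L$-space knot, $K(q)$ is an $L$-space for every rational $q \ge 2g(K)-1$ (the rational extension of the $L$-space surgery criterion of \cite{LiscaStipsiczSzabo}). Second, by \cite{OzsvathSzaboHolo}, any weak symplectic filling of an $L$-space rational homology sphere is negative definite. Combining these, a hypothetical weak symplectic filling $(X,\omega)$ of $K(q)$ would furnish a negative-definite 4-manifold with boundary $K(q)$, directly contradicting the conclusion of the previous paragraph. This rules out weak symplectic fillings for every rational $q \in [2g(K)-1, n]$.

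The main subtlety I anticipate is the handling of the endpoint $q = n$. Purely from the infimum characterization of $m(K)$, the slope $m(K)$ itself could a priori bound a negative-definite 4-manifold, so $q \le n \le m(K)$ alone is not enough at $q = n$. The theorem sidesteps this cleanly by hypothesizing directly that $K(n)$ bounds no such 4-manifold, so the argument is robust at the boundary; the only care needed is to distinguish the strict chain $q < n$ from the equality $q = n$ when invoking the definition of $m(K)$.
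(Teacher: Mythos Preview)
Your argument is correct and follows essentially the same route as the paper: both combine Theorem~\ref{theorem:owensstrlerationals} with the Ozsv\'ath--Szab\'o negative-definiteness constraint on fillings of $L$-spaces. The paper phrases it as a short contradiction (a weak filling at some $q\le n$ forces $m(K)\le q\le n$, hence $K(n)$ bounds a negative-definite $4$-manifold), while you run the same implication directly by first deducing $n\le m(K)$ and then reading off that every $q<n$ lies below $m(K)$; your extra care at the endpoint $q=n$ is a nice touch but not a genuinely different idea.
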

\begin{proof}
    Assume by contradiction there is a slope $q \in [2g(K)-1,n]$ so that $K(q)$ has a weakly symplectically fillable tight structure. Then $K(q)$ bounds a negative-definite 4-manifold. By Theorem \ref{theorem:owensstrlerationals} every manifold $K(q')$ with $q' \ge q$ also bounds a negative-definite 4-manifold. However $K(n)$ cannot bound a negative-definite manifold by assumption. This concludes the proof.
\end{proof}

\section{A family of hyperbolic $L$-spaces without weak symplectic fillings}

In this section we apply the methods introduced in Section 2 to a family of $L$-space knots which does not consist of pretzel knots. 

Let $\{K_n\}_n$ be the family of knots obtained as the closure of the braids:
\begin{equation}\label{equation:braid_presentation}
    \beta_n = [(2,1,3,2)^{2n+1},-1,2,1,1,2].
\end{equation}
See Figure \ref{figure:braids}.
Here we adopt the notation from \cite{baker2022census}, where the numbers in the brackets denote the corresponding generator in the braid group with 4-strands.

Note this is a strongly quasi positive presentation of $\beta_n$ that is \emph{almost braid positive}, i.e. it is braid positive for all but one negative crossing.

This family of knots is particularly interesting for our present purposes due to the following result:
\begin{theorem}[{\cite[Proposition 2.1]{baker2022census}}]\label{theorem:bakerkegel}
    The knots $K_n$, $n\ge 1$, are hyperbolic $L$-space knots.
\end{theorem}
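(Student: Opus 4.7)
My plan is to exploit the twist--family structure hidden in the braid word. Writing $\beta_n = \tau^{2n+1} \cdot w$, with $\tau = \sigma_2\sigma_1\sigma_3\sigma_2$ and $w = \sigma_1^{-1}\sigma_2\sigma_1^{2}\sigma_2$, note that $\tau$ induces the permutation $(1\,3)(2\,4)$ and so $\tau^2$ is a pure braid; geometrically this realizes $K_n$ as the image of a fixed knot $K_0$ under $-1/n$ Dehn surgery on an unknotted twisting axis $c$ in $S^3 \setminus K_0$. This is the setup that lets us handle all $n$ at once via a combination of Thurston's hyperbolic Dehn surgery theorem and an L-space twist--family argument.

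First I would dispose of a base case. For $n=1$ (and a few small values) I would (i) rigorously certify a hyperbolic structure on $S^3\setminus K_n$ via SnapPy, and (ii) exhibit an explicit positive integer surgery producing an L-space, either by Kirby--calculus reducing it to a Seifert fibered L-space or by a direct bordered/Heegaard Floer computation. Since $\beta_n$ is strongly quasi-positive, $g(K_n)$ is computed from the Rudolph--Bennequin formula and is linear in $n$, so Lemma~\ref{lemma:L-space-Alexander-Polynomial} together with the standard fact that for an L-space knot every slope $\ge 2g-1$ is an L-space surgery pins down the candidate minimal L-space slope $2g(K_n)-1$.

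Next I would propagate both properties to all $n\ge 1$. After checking that the link exterior $S^3\setminus(K_0\cup c)$ is itself hyperbolic (SnapPy on the base case, plus primality and atoroidality checks on the link), Thurston's hyperbolic Dehn surgery theorem gives hyperbolic knot complements for all but finitely many fillings on $c$, and the finitely many exceptional slopes are ruled out by a length estimate in the maximal cusp. For the L-space property I would apply a twist--family L-space result in the spirit of \cite{baker2022census}: once one knot in the family admits an L-space surgery and the linking data of $c$ relative to $K_0$ is suitable, every member of the family does, with L-space slopes depending linearly on the twist parameter.

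The decisive step is the L-space propagation. Hyperbolicity follows essentially automatically from Thurston's theorem after the base case, but the L-space property does not transfer along arbitrary twist families; one must verify a genuine combinatorial and homological linking condition between $K_0$ and $c$. An alternative route is to produce, for each $n$, a uniform surgery description of $K_n(2g(K_n)-1)$ (for instance as a plumbing whose intersection form visibly meets the Ozsv\'ath--Szab\'o L-space criterion), but either way this combinatorial analysis of $\beta_n$ is where the real work lies.
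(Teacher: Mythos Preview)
The paper does not prove this statement itself; it quotes it from \cite{baker2022census} and only summarizes the argument for the $L$-space half: Baker--Kegel show that $(8n+6)$--surgery on $K_n$ is a Seifert fibered space, and then invoke the Lisca--Stipsicz classification \cite{LiscaStipsiczSzabo} to see that this Seifert fibered space is an $L$-space. So the comparison is between your twist--family propagation scheme and that direct, uniform identification of a single explicit $L$-space surgery for every $n$.

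These are genuinely different routes. The Baker--Kegel argument, as described here, never needs a base case or a propagation theorem: one Kirby--calculus (or surgery--description) computation, done with $n$ as a parameter, produces a Seifert fibered manifold whose invariants depend linearly on $n$, and the $L$-space criterion from \cite{LiscaStipsiczSzabo} is then a pure inequality check that holds for all $n$ at once. Your plan trades this for a base--case computation plus an appeal to an $L$-space twist--family theorem. The hyperbolicity part of your outline (verify hyperbolicity of the two--component link complement, then Thurston's hyperbolic Dehn filling plus a finite check) is standard and likely close to what is actually done in \cite{baker2022census}; that part is fine.

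The real gap is the step you yourself flag: the $L$-space propagation. Results of Baker--Motegi type on $L$-space knots in twist families have nontrivial hypotheses (e.g.\ on the linking number of the twisting circle, or on the Seifert--surface framing), and you have not verified any of them for the pair $(K_0,c)$ arising from your decomposition $\beta_n=\tau^{2n+1}w$. Without that, the inference ``$K_1$ is an $L$-space knot $\Rightarrow$ every $K_n$ is'' is not justified. The alternative you mention --- exhibiting $K_n(2g(K_n)-1)$ uniformly as a plumbing meeting the Ozsv\'ath--Szab\'o criterion --- is closer in spirit to what Baker--Kegel actually do, except that they use the slope $8n+6$ rather than $2g(K_n)-1=8n+3$ and land on a Seifert fibered space rather than a general plumbing. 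If you want a self--contained argument, that direct surgery identification is both shorter and avoids the unverified propagation hypothesis.
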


The $L$-space surgery property in Theorem \ref{theorem:bakerkegel} is proved in \cite{baker2022census} by showing that $(8n+6)$ surgery on $K_n$ produces a Seifert fibered space that is known to be an $L$-space thanks to \cite{LiscaStipsiczSzabo}. In general we are interested in hyperbolic 3-manifolds. Our surgery coefficients lie in $[8n+3, 8n+5]$ (the interval bounds will become more clear after our computations) and in this interval there might be Seifert fibered surgeries. However there are only finitely many of them, so we implicitly assume that we are choosing hyperbolic surgeries without loss of generality.

As computed in \cite{baker2022census}, the symmetrized Alexander polynomial of the knot $K_n$ is:
\begin{equation}\label{equation:alexander_polynomial}
    \Delta_{K_n}(t)= 1+ \sum_{k=0}^{n} (t^{4k+2}+t^{-4k-2}) - \sum_{j=0}^n (t^{4j+1}+t^{-4j-1})
\end{equation}
The Seifert genus of the knot $K_n$ is given by the maximal exponent in $ \Delta_{K_n}(t)$. Hence we have $g(K_n)=4n+2$.

By \cite{LiscaStipsiczSzabo}, we know that for an $L$-space knot $K$ of genus $g(K)$, every  $r$ surgery with $r \ge 2g(K)-1$ yields an $L$-space.
Thus, in our case, since $g(K_n)= 8n+3$, we have that each $K_n(r)$ is an $L$-space for $r \in \mathbb{Q} $ and $r \ge 8n+3$.
Moreover as stated in Theorem \ref{theorem:bakerkegel} $K_n$ is a hyperbolic knot, meaning that every rational surgery on $K_n$ except for finitely many exceptional ones yields a hyperbolic 3-manifold.
We now state the main result of this section.
\begin{theorem:main}
   Let $n \ge 1 \in \mathbb{N}$ so that there is a square-free integer in  $\{8n+3, 8n+5\}$. Let $m$ be the maximal such square-free number and $q$ a rational number so that $ q \in [8n+3, m]$. Let $K_n(q)$ denote the $q$ rational Dehn surgery along the knot $K_n$.  Then $K_n(q)$ is (generically) an hyperbolic 3-manifold that admits no weakly symplectically fillable contact structures.
\end{theorem:main}

Consider for example the value $n=1$. In this case, $8n+5=13$ which is prime and hence square-free. Thus for the hyperbolic $L$-space knot $K_1$ we have produced an infinite family $K_1(q)$, $q \in [11,13] \cap \mathbb{Q}$, of 3-manifolds that do not admit weakly symplectically fillable tight contact structures.

Due to the considerations above about hyperbolicity, this family of manifolds $K_n(q)$ contains an infinite sub-family of hyperbolic 3-manifolds that are $L$-spaces and do not admit any weakly symplectically fillable contact structure.

\begin{remark}
    It is important to mention that there are \emph{infinitely many primes} of the form $8k+5$ (this is a special case of \emph{Dirichlet's Prime Number Theorem}). Thus there are infinitely many values of $n\ge 1$ for which $8n+5$ is prime and hence square-free. 
    As such, there are infinitely many $n$ for which the interval $[8n+3, 8n+5]$ does indeed contain a square-free number, and the maximum such number is $8n+5$. So we actually produced a $(\mathbb{N} \times \mathbb{Q})$-infinite family (indexed both by $n$ and $q$) of hyperbolic $L$-spaces not admitting weak symplectic fillings.
\end{remark}

We now prove Theorem \ref{theorem:main}. 
\begin{proposition}\label{proposition:torsion_coefficients}
    For the hyperbolic L-space knot $K_n$, the torsion coefficients $t_j(K_n)$ are given by:
    \begin{equation*}
        t_j(K_n)=
        \begin{cases}
           0 \text{ for } j > 4n+2 \\
         n-\lfloor \frac{j+2}{4} \rfloor +1 \text{ for } j=0,{\cdots},4n+2.
        \end{cases}
    \end{equation*}
\end{proposition}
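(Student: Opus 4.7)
My plan is to apply Lemma~\ref{lemma:general_torsion_coefficients} to the knot $K_n$, by first extracting the vector $r=(r_1,\ldots,r_k)$ describing $\Delta_{K_n}(t)$ and the derived quantities $a_j,b_l$; the expression for $t_j(K_n)$ then reduces to counting an arithmetic progression, which collapses to the claimed floor expression.

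First I would list the exponents of $\Delta_{K_n}(t)$ in decreasing order, namely
\[
4n+2,\ 4n+1,\ 4n-2,\ 4n-3,\ \ldots,\ 2,\ 1,\ 0,\ -1,\ -2,\ -5,\ -6,\ \ldots,\ -(4n+2).
\]
Counting these shows $2k+1=4n+5$, hence $k=2n+2$ and $h=n+1$. Applying the definition $r_i=n_{k+2-2i}-n_{k+1-2i}$ together with the symmetry $n_{-s}=-n_s$, the pair $(n_{k+2-2i},n_{k+1-2i})$ has one of three forms: $(4m+2,4m+1)$ when both indices are positive ($i\le n+1$); $(0,-1)$ at $i=n+2$; and $(-(4p-2),-(4p+1))$ when both indices are negative ($i=n+2+p$ with $1\le p\le n$). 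The first two give jumps of $1$ while the third gives jumps of $3$, so
\[
r_i=1\ \text{for } i=1,\ldots,n+2,\qquad r_i=3\ \text{for } i=n+3,\ldots,2n+2.
\]

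Next I would read off $A_j,B_j,C_l$ and hence $a_j,b_l$. Since the first $n+2$ entries of $r$ equal $1$, one has $A_j=j$ for $j\le h$; since the last $n$ entries equal $3$, one has $B_j=3j$ for $j\le n$, $B_{n+1}=3n+1$, and $C_l=3(l-1)$ for $2\le l\le n+1$. Substituting into $a_j=g(K_n)-(A_j+B_j)$ and $b_l=g(K_n)-(A_l+C_l)$ with $g(K_n)=4n+2$ yields $a_0=4n+2$, $a_j=4n+2-4j$ for $1\le j\le n$, $a_{n+1}=0$, together with $b_1=4n+1$ and $b_l=4n+5-4l$ for $2\le l\le n+1$. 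From these values I would read off that the intervals $(b_{h-i},a_{h-i-1}]$ for $i=0,\ldots,n$ are exactly $(4m+1,4m+2]$ with $m=0,1,\ldots,n$; each such interval contains the single integer $4m+2$.

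Plugging this into Lemma~\ref{lemma:general_torsion_coefficients} gives, for $0\le j\le g(K_n)-1$,
\[
t_j(K_n)=\#\bigl\{m\in\{0,\ldots,n\}:\ 4m+2>j\bigr\},
\]
while $t_j(K_n)=0$ for $j\ge 4n+2$ is immediate from the definition of the torsion coefficients. Rewriting $4m+2>j$ as $m\ge\lceil(j-1)/4\rceil$ and using the standard identity $\lceil(j-1)/4\rceil=\lfloor(j+2)/4\rfloor$ converts this count to $n+1-\lfloor(j+2)/4\rfloor$, which is exactly the formula claimed in the proposition. The main obstacle is the first step: the indexing convention of Lemma~\ref{lemma:L-space-Alexander-Polynomial} requires careful bookkeeping across the central exponent $n_0=0$, and one has to verify that the transition from the block of $1$'s to the block of $3$'s in the $r_i$ sequence happens precisely at $i=n+3$. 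Once this is pinned down, the identification of the intervals and the final floor-function simplification are routine.
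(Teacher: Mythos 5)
Your proposal is correct and follows essentially the same route as the paper: both extract the jump vector $r=(1,\ldots,1,3,\ldots,3)$ (with $n+2$ ones and $n$ threes, $k=2n+2$) from $\Delta_{K_n}(t)$ and then apply the Section 2 machinery to see that the torsion coefficients increase exactly on the intervals $(4m+1,4m+2]$, $m=0,\ldots,n$, which yields the claimed formula. Your write-up is somewhat more explicit than the paper's (deriving the $r_i$ pair by pair across $n_0=0$ and closing with the ceiling/floor identity), but the underlying argument is the same.
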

\begin{proof}
   Recall that the symmetrized Alexander polynomial of $K_n$ is given by $$\Delta_{K_n}(t)= 1+ \sum_{k=0}^{n} (t^{4k+2}+t^{-4k-2}) - \sum_{j=0}^n (t^{4j+1}+t^{-4j-1}).$$
   As we did in Section 2, we can describe $\Delta_{K_n}(t)$ using a vector $r=(r_1,{\cdots}, r_k)$ that encodes the jumps in the coefficients of $\Delta_{K_n}(t)$. In this case, the vector $r=(r_1,{\cdots},r_k)$ is given by $(1,{\cdots},1,3,{\cdots},3)$ where we have $n+2$ entries equal to 1 and $n$ entries equal to 3. Here $k$, which is the number of entries of $r$ is equal to $2n+2=2(n+1)$.

Then equation \ref{formula:general_difference} translates to the condition $t_{j-1}(K_n)=t_j(K_n)$ if $j$ is contained in an interval of the form $(4m+2,4m+5]$ for $m=0,{\cdots},n-1$
and $t_{j-1}(K_n)=t_j(K_n)+1$ if $j$ is contained in an interval $(4m+1,4m+2]$ for $m=0,{\cdots},n$.
In other words $t_{j-1}(K_n)=t_j(K_n)$  if $j$ is congruent to $0,1,3$ (mod $4$) and  $t_{j-1}(K_n)=t_j(K_n)+1$ if $j$ is congruent to $2$ (mod 4).

Clearly $t_{4n+2}(K_n)=0$. Using the above considerations it is then straightforward to compute the value of $t_j$ for $j \le 4n+2$. See Figure \ref{fig:graphic_main} for the case $K_2$.
\end{proof}
\begin{figure}[htb]
\centering
\begin{tikzpicture}
%\draw[step=1cm,color=gray] (0,0) grid (14,6);%Uncomment this to get some helpful grid lines
\node[anchor=south west,inner sep=0] at (0,0){\includegraphics[width=13cm]{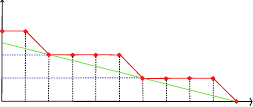}};

\end{tikzpicture}
\caption{This graphic shows the values of $t_j(K_2)$ for $j=0,\cdots,10$. The red line is the linear interpolation between the values of $t_j(K_2)$. The green line is the function $h(j)=-\frac{j}{4}+\frac{5}{2}$ which we can use to obtain the desired inequalities.}
\label{fig:graphic_main}
\end{figure}
\begin{proposition}\label{proposition:d_invariants}
    Let $n \ge 1$ and $m \in \{8n+3, 8n+4, 8n+5\}$. Then all the $d$-invariants of $K_n(m)$ are negative.
\end{proposition}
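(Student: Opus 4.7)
The plan is to combine Proposition \ref{proposition:torsion_coefficients} with the $d$-invariant formula of Theorem \ref{theorem:general_d_invariants} and then control the result via a carefully chosen linear lower bound on the torsion coefficients, exactly as illustrated in Figure \ref{fig:graphic_main}.

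First I would note that for every $m \in \{8n+3, 8n+4, 8n+5\}$ the admissible indices $|i| \le m/2$ are integers satisfying $|i| \le 4n+2 = g(K_n)$, so throughout
\begin{equation*}
d(K_n(m), i) = \frac{(m - 2|i|)^2}{4m} - \frac{1}{4} - 2\, t_{|i|}(K_n)
\end{equation*}
with $t_{|i|}(K_n)$ read off from Proposition \ref{proposition:torsion_coefficients}. Mirroring the $n=2$ picture in Figure \ref{fig:graphic_main}, I would introduce the affine function $h(j) = -j/4 + n + 1/2$, and via a short case analysis on $j \bmod 4$ using the closed form $t_j(K_n) = n - \lfloor (j+2)/4\rfloor + 1$, check that $h(j) \le t_j(K_n)$ for every $j \in \{0, 1, \ldots, 4n+2\}$, with equality precisely when $j \equiv 2 \pmod 4$.

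Substituting $h$ for $t_{|i|}$ yields the quadratic upper bound
\begin{equation*}
d(K_n(m), i) \le f(|i|) := \frac{(m - 2|i|)^2}{4m} + \frac{|i|}{2} - 2n - \frac{5}{4}.
\end{equation*}
Since $f$ is an upward-opening parabola in $|i|$, its maximum over the real interval $[0, m/2]$ is attained at the endpoints, where a direct computation gives $f(0) = f(m/2) = (m - 8n - 5)/4$. For $m = 8n+3$ this equals $-1/2$ and for $m = 8n+4$ it equals $-1/4$, so $d(K_n(m), i) \le f(|i|) < 0$ for every $Spin^c$ index.

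The delicate case, and the main obstacle, is $m = 8n+5$, where $f$ vanishes at both real endpoints. The resolution splits into two observations. First, $m/2 = 4n + 5/2$ is not an integer, and the largest admissible integer value $|i| = 4n+2$ lies strictly inside $(0, m/2)$, so strict convexity of $f$ gives $f(|i|) < 0$ for every integer $|i| \in \{1, \ldots, 4n+2\}$. Second, at $|i| = 0$ the linear bound $h$ is no longer tight: since $h(0) = n + 1/2 < n + 1 = t_0(K_n)$, the actual value computes to $d(K_n(8n+5), 0) = (8n+5)/4 - 1/4 - 2(n+1) = -1 < 0$. These two observations exhaust all $Spin^c$ indices and establish the proposition.
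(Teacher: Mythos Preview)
Your proof is correct and follows essentially the same route as the paper: the identical affine lower bound $h(j)=-j/4+n+1/2$ on the torsion coefficients, the resulting quadratic $f$, and a convexity check at the endpoints (the paper splits off $j=4n+2$ separately, whereas you use the symmetry $f(0)=f(m/2)$, but the content is the same). Your argument is in fact slightly more careful: you explicitly handle $m=8n+5$, $|i|=0$, where $f(0)=0$ and one must invoke $t_0(K_n)=n+1>h(0)$ to obtain strict negativity, a point the paper's proof glosses over by asserting that $(k-5)/4$ is negative.
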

\begin{proof}
    We use Theorem \ref{theorem:OwensStrle2} to compute the $d$-invariants for $m= 8n+k$ with $k \in \{3,4,5\}$.
    Recall that we have to check negativity for the $d$-invatiants corresponding to all the indices $|j| \le 4n+ \lfloor \frac{k}{2}\rfloor$. 
    
    Since our $k$ is in $\{3,4,5\}$, we can split the calculations in two cases. 
    We first assume that $k=3$. Then the coefficients we have to consider are $0 \le j \le 4n+1$ (the negative case is identical).
    
    If $k \neq 3$, then we also have to consider the torsion coefficient $t_h$ with $h=4n+2$, which is equal to 0 by Proposition \ref{proposition:torsion_coefficients}.

    First let $0 \le j \le 4n+1$.
    We have:
    \begin{align}\label{equation:calculation_d_invariants}
        d(K_n(8n+k), \pm j) &= \frac{(8n+k-2j)^2}{4(8n+k)}- \frac{1}{4}-2(n-\lfloor \frac{j+2}{4} \rfloor +1)\\
        &\le \frac{(8n+k)}{4}-\frac{j}{2}-\frac{5}{4}+ \frac{j^2}{(8n+k)}-2n\\
        &=\frac{j^2}{(8n+k)}-\frac{j}{2}+\frac{(k-5)}{4}.
    \end{align}

    If we evaluate the last expression at 0 we obtain $\frac{k-5}{4}$, which by assumption on $k$ is negative. Note that in passing from the first to the second step we bounded the values of the torsion coefficients from below using the function $h(j)=-\frac{j}{4}+n+\frac{1}{2}$. See Figure \ref{fig:graphic_main} for the case $K_2$.

    Let $f(j)=\frac{j^2}{(8n+k)}-\frac{j}{2}+\frac{(k-5)}{4}$. Here we are extending the domain so that $j$ is varying on the real line. This is a decreasing function from 0 to $j=\frac{8n+k}{4}$, after which it starts increasing.
    
    The value at $j=4n+1$ is:
    \begin{equation*}
        \frac{(4n+1)^2}{(8n+k)}-\frac{(4n+1)}{2}+\frac{(k-5)}{4}=\frac{-24n+k^2-7k+4}{4(8n+k)}.
    \end{equation*}
    From  the above equality we see that if $k\in \{3,4,5\}$ then the value of $f(4n+1)$ is negative.
Thus we have showed that all the $d$-invariants up to $j=4n+1$ are negative. If $k=3$, this is sufficient to conclude the proof. 

On the other hand, assume $k \in \{4,5\}$. Then the $d$-invariant corresponding to $4n+2$ can be easily shown to be negative as well. 

\begin{align}\label{biggerd-invariants}
    \frac{(8n+k-2(4n+2))^2}{4(8n+k)}-\frac{1}{4}= \frac{(k-4)^2}{4(8n+k)}-\frac{1}{4} < 0
\end{align} for our values of $k$.

Thus we have shown that all the $d$-invariants for integral $8n+k$ surgery on $K_n$ with $k \in \{3,4,5\}$ are negative.
\end{proof}

We have thus shown that all the $d$-invariants in this case are non weak.

\begin{proof}[Proof of Theorem \ref{theorem:main}]
    The proof is a direct application of Theorem \ref{theorem:rationals_obstruction}. 
\end{proof}
\begin{remark}
    Note that to conclude the non weakness of the $d$-invariants of $K_n(8n+3)$ we could have used Proposition \ref{lemma:rough_estimate} directly.
\end{remark}

\section{Existence of tight contact structures}

We now show that the family of hyperbolic $L$-spaces introduced in Section 3 does admit tight contact structures. Hence we have constructed an infinite family of hyperbolic $L$-spaces $K_n(q)$ that do admit tight contact structures but none of them can be weakly symplectically fillable.
First recall that the braid presentation in Figure \ref{equation:braid_presentation} shows that the knots $K_n$ are strongly-quasi positive.
As such, the slice genus $g_s(K_n)$ and the Seifert genus $g(K_n)$ of the knot agree \cite{Hedden}. As we showed in Section 3, the Seifert genus $g(K_n)=4n+2$.

Moreover, we know that for quasi-positive knots the Slice Bennequin inequality is sharp. In particular:
\begin{equation*}
    tb(\Tilde{K}_n)+|rot(\Tilde{K}_n)|=2g_s(K_n)-1
\end{equation*}
for some Legendrian representative $\Tilde{K}_n$ of $K_n$ in $\mathbb{S}^3$.
In our case, this translates to:
\begin{equation*}
    tb(\Tilde{K}_n)+|rot(\Tilde{K}_n)|=8n+3
\end{equation*}
for some Legendrian representative $\Tilde{K}_n$ of $K_n$.
\begin{figure}[htb]
\centering
\begin{tikzpicture}
%\draw[step=1cm,color=gray] (0,0) grid (14,6);%Uncomment this to get some helpful grid lines
\node[anchor=south west,inner sep=0] at (0,0){\includegraphics[width=13cm]{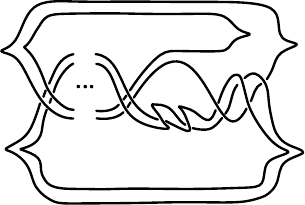}};
\node at (3.65,4.55) {$2n+1$};
\end{tikzpicture}
\caption{The Legendrian representative $\Tilde{K}_n$,}
\label{fig:knotK_nlegendrian}
\end{figure}
An explicit example of this can be directly seen by taking the Legendrian knot diagram in Figure \ref{fig:knotK_nlegendrian}.

It is a straighforward calculation to show that indeed $tb(\Tilde{K}_n)+|rot(\Tilde{K}_n)|=8n+3$ for the Legendrian knot $\Tilde{K}_n$, where $tb=8n+1$ and $|rot|=2$.

In \cite{TosunThomas} the authors prove the following result:
\begin{theorem}[{\cite[Theorem 1.3]{TosunThomas}}]\label{theorem:tosun}
Let $K$ $\subset \mathbb{S}^3$ be a knot with slice genus $g_s(K) >0$ that admits a Legendrian representative $K$ with $tb(K) + |rot(K)|= 2g_s(K)-1$. Then the manifold $\mathbb{S}^3_{\frac{p}{q}}(K)$ obtained by smooth $(\frac{p}{q})$-surgery along $K$ admits a tight contact structure for every $\frac{p}{q} \notin [2g_s(K)-|rot(K)|-1, 2g_s(K)-1]$.
\end{theorem}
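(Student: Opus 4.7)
The hypothesis $tb(\widetilde{K}) + |rot(\widetilde{K})| = 2g_s(K) - 1$ lets me rewrite the excluded interval as $[tb(\widetilde{K}),\, tb(\widetilde{K}) + |rot(\widetilde{K})|]$, so I would split the theorem into the two cases $p/q < tb(\widetilde{K})$ and $p/q > tb(\widetilde{K}) + |rot(\widetilde{K})| = 2g_s(K)-1$.

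The first case is the classical, \emph{fillable} side. For any rational slope $p/q < tb(\widetilde{K})$, the Ding--Geiges contact surgery calculus realises smooth $(p/q)$-surgery on $\widetilde{K}$ as a sequence of contact $(-1)$-surgeries along a Legendrian link consisting of a sufficient number of stabilisations of $\widetilde{K}$ together with Legendrian push-offs of a meridian. Each contact $(-1)$-surgery is a Weinstein $2$-handle attachment, so the resulting $4$-manifold is a Stein filling of $\mathbb{S}^3_{p/q}(K)$ with its induced contact structure, and tightness follows by Eliashberg--Gromov. This step is essentially bookkeeping, once the stabilisations are chosen so that the continued-fraction expansion of $p/q - tb(\widetilde{K})$ realises the desired smooth slope.

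The substantive case is $p/q > 2g_s(K) - 1$. The Ding--Geiges--Stipsicz algorithm still produces an explicit contact structure $\xi_{p/q}$ on $\mathbb{S}^3_{p/q}(K)$, now via one contact $(+1)$-surgery on $\widetilde{K}$ combined with contact $(-1)$-surgeries on stabilised copies; this $\xi_{p/q}$ need not be fillable, so tightness does not come for free. I would prove tightness by computing the Ozsv\'ath--Szab\'o contact class $c(\xi_{p/q})$ from the LOSS invariant $\mathfrak{L}(\widetilde{K})$ of a transverse push-off of $\widetilde{K}$. The extremality $tb + |rot| = 2g_s(K)-1$ is precisely the condition that forces $\mathfrak{L}(\widetilde{K}) \neq 0$ in knot Floer homology, by work of Plamenevskaya and Lisca--Ozsv\'ath--Stipsicz--Szab\'o. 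Naturality of $\mathfrak{L}$ under contact surgery then propagates this non-vanishing to $c(\xi_{p/q}) \neq 0$ in $\widehat{HF}(-\mathbb{S}^3_{p/q}(K))$, which implies tightness of $\xi_{p/q}$.

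The main obstacle in this plan is pinning down exactly which slopes the LOSS non-vanishing survives to. One has to trace $\mathfrak{L}(\widetilde{K})$ through the explicit contact surgery cobordism, identify the grading in which its image lands, and show that this image is non-zero precisely when $p/q > 2g_s(K) - 1$. The threshold $2g_s(K) - 1$ should emerge from a $\tau$-type invariant attached to $\mathfrak{L}$, matching the sharp slice Bennequin bound that enters via the hypothesis; the endpoints of the excluded interval correspond to the contact $(+1)$-surgery being applied in the critical regime where the contact invariant genuinely can vanish, which is why that interval must be removed from the statement.
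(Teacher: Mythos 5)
First, a structural point: the paper does not prove this statement at all --- Theorem \ref{theorem:tosun} is quoted from \cite{TosunThomas} and used as a black box in the proof of Theorem \ref{corollary:existence} --- so there is no internal argument to compare yours against; I can only judge your sketch against the strategy of the cited work. Your decomposition is the right one and matches that strategy: rewriting the excluded interval as $[tb(\widetilde{K}),\, tb(\widetilde{K})+|rot(\widetilde{K})|]$, handling $p/q < tb(\widetilde{K})$ by realizing the smooth surgery as contact $(-1)$-surgeries on stabilized Legendrian push-offs of $\widetilde{K}$ itself (no meridional components are needed in the Ding--Geiges algorithm for negative contact surgery coefficients), which yields a Stein fillable and hence tight structure; and handling $p/q > 2g_s(K)-1$ by showing that a positive contact surgery on $\widetilde{K}$ has non-vanishing Ozsv\'ath--Szab\'o contact invariant.

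The gap is in the second case, which is the entire content of the theorem. The claim that the extremality $tb+|rot| = 2g_s(K)-1$ ``forces $\mathfrak{L}(\widetilde{K}) \neq 0$ by work of Plamenevskaya and Lisca--Ozsv\'ath--Stipsicz--Szab\'o'' is not an available result: no such general non-vanishing criterion for the LOSS invariant is known in this form, and non-vanishing of $\hat{\mathfrak{L}}$ alone would not suffice anyway, because one must show that its image under the relevant surgery maps is non-zero in $\widehat{HF}$ of $\mathbb{S}^3_{p/q}(K)$ for \emph{every} rational $p/q > 2g_s(K)-1$, including slopes arbitrarily close to the threshold, where the contact surgery diagram involves several $(+1)$-surgery components and the propagation is not formal. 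What the hypothesis actually provides, via the slice--Bennequin bound for $\tau$, is $\tau(K)=\nu(K)=g_s(K)$, and the proofs in the literature (Golla for integral coefficients, Mark--Tosun for rational ones) consist precisely of the mapping-cone/surgery-formula analysis showing that under these conditions the contact invariant of positive rational contact surgeries does not vanish. This is exactly the step you defer as ``the main obstacle,'' so your proposal is a correct road map that reduces the theorem to the statement that still needs proving, rather than a proof.
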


In our case the family of strongly-quasi positive knots $K_n$ satisfies the hypothesis of Theorem \ref{theorem:tosun}. Hence we obtain the following theorem:
\begin{corollary:existence}
    Let $n \ge 1$ and $q \in \mathbb{Q}$. Then the manifolds $K_n(q)$ admit a tight contact structure for $q \notin [8n+1, 8n+3]$. 
    
 Moreover if $8n+5$ is square-free, the family $K_n(q)$, $q \in (8n+3, 8n+5] \cap \mathbb{Q}$ , contains an infinite family of hyperbolic $L$-spaces that admit tight contact structures but do not admit weakly symplectically fillable contact structures.
\end{corollary:existence}
\begin{proof}
    The proof is a combination of Theorem \ref{theorem:main} and Theorem \ref{theorem:tosun}.
\end{proof}

\begin{remark}
    In the proof of Theorem \ref{theorem:tosun} the tight structures constructed by smooth $(\frac{p}{q})$-surgery along the knot $K$ have \emph{non-vanishing} contact invariant. 
    They cannot contain Giroux torsion since a contact structure $\xi$ with non-trivial torsion has vanishing contact invariant $c(\xi)$. 
    Recall that having non-trivial torsion is the first obvious obstruction to weak fillability.
    The family of manifolds $K_n(q)$ constructed here is however an example of torsion-free tight manifolds without weakly symplectic fillings.

\end{remark}
\clearpage

\printbibliography[
heading=bibintoc,
title={References}
] 

\end{document}